\newcounter{constant}
\newcommand{\newconstant}[1]{\refstepcounter{constant}\label{#1}}
\newcommand{\useconstant}[1]{c_{\textnormal{\tiny~\ref{#1}}}}
\newcounter{bigconstant}
\newtheorem{theorem}{Theorem}[section]
\newtheorem{proposition}[theorem]{Proposition}
\newtheorem{lemma}[theorem]{Lemma}
\theoremstyle{definition}
\newtheorem{remark}[theorem]{Remark}
\numberwithin{equation}{section}
\newcommand{\PP}{\mathbb{P}}
\newcommand{\EE}{\mathbb{E}}
\newcommand{\RR}{\mathbb{R}}
\newcommand{\NN}{\mathbb{N}}
\newcommand{\ZZ}{\mathbb{Z}}
\newcommand{\charf}[1]{\mathbf{1}_{#1}}
\DeclareMathOperator{\dist}{d}
\DeclareMathOperator{\var}{Var}
\DeclareMathOperator{\Inf}{Inf}
\DeclareMathOperator{\Arm}{Arm}
\begin{document}

\title{Noise sensitivity and Voronoi percolation}

\author{Daniel Ahlberg\footnote{Department of Mathematics, Stockholm University, SE - 106 91 Stockholm, Sweden.} \and Rangel Baldasso\footnote{IMPA, Estrada Dona Castorina 110, 22460-320, Rio de Janeiro, RJ, Brazil.}
{\let\thefootnote\relax\footnote{E-mail: dahlb@math.su.se; baldasso@impa.br}}
}

\maketitle

\begin{abstract}
In this paper we study noise sensitivity and threshold phenomena for Poisson Voronoi percolation on $\mathbb{R}^2$. In the setting of Boolean functions, both threshold phenomena and noise sensitivity can be understood via the study of randomized algorithms. Together with a simple discretization argument, such techniques apply also to the continuum setting.
Via the study of a suitable algorithm we show that box-crossing events in Voronoi percolation are noise sensitive and present a threshold phenomenon with polynomial window. We also study the effect of other kinds of perturbations, and emphasize the fact that the techniques we use apply for a broad range of models.
\end{abstract}

\section{Introduction}\label{sec:intro}

\par The concept of a Boolean function, $f:\{0,1\}^n\to\{0,1\}$, is of fundamental importance in theoretical computer science. Moreover, many of the most well-studied problems in the intersection between combinatorics and probability theory may be phrased in terms of (often monotone) Boolean functions. One is, in this context, interested in the typical behaviour of a Boolean function for an element in $\{0,1\}^n$ chosen according to product measure with marginal density $p$, henceforth denoted by $\PP_p$.
The study of Boolean functions has led to a vast literature on a range of fascinating phenomena, such as the existence of thresholds and the effect of small perturbations, see e.g.~\cite{book_ns,odonnell}.

\par Threshold phenomena of monotone Boolean functions were first discovered by Erd\H os and R\'enyi~\cite{er} in their pioneering study of random graphs. The existence of a sharp threshold is the essence of Kesten's celebrated 1980 proof that the critical probability for the existence of an infinite connected component in bond percolation on $\mathbb{Z}^2$ equals $\sfrac12$~\cite{kesten80}.
A sequence $(f_n)_{n\ge1}$ of monotone\footnote{A Boolean function is monotone if $f_n(\omega')\ge f_n(\omega)$ whenever $\omega'\ge\omega$ coordinate-wise.} Boolean functions $f_n:\{0,1\}^n\to\{0,1\}$ is said to have a {\bf threshold} at $p\in(0,1)$ if, for every $\epsilon>0$, we have 
$$
\lim_{n\to\infty}\PP_{p-\epsilon}[f_n=1]=0\quad\text{and}\quad\lim_{n\to\infty}\PP_{p+\epsilon}[f_n=1]=1.
$$
The understanding of thresholds has increased with works by Russo~\cite{russo82}, Kahn, Kalai and Linial~\cite{kkl}, Friedgut and Kalai~\cite{fk}, and Talagrand~\cite{tal}.

\par The notion of noise sensitivity was introduced in a seminal paper by Benjamini, Kalai and Schramm~\cite{bks}. Given $\omega\in\{0,1\}^n$, chosen according to $\PP_p$, we obtain an $\epsilon$-perturbation $\omega^\epsilon$ of $\omega$ by resampling each bit of $\omega$ independently with probability $\epsilon$. A sequence $(f_n)_{n\ge1}$ of functions $f_n:\{0,1\}^n\to\{0,1\}$ is said to be {\bf noise sensitive} at level $p$ (NS$_p$ for short) if $f_n(\omega)$ and $f_n(\omega^\epsilon)$ are asymptotically uncorrelated, i.e., if
\begin{equation}\label{eq:ns_def}
\EE_p[f_n(\omega)f_n(\omega^\epsilon)]-\EE_p[f_n(\omega)]^2\to0, \quad\text{as }n\to\infty.
\end{equation}
The study of noise sensitivity has led to a detailed understanding of certain planar percolation models, both discrete: Benjamini, Kalai and Schramm~\cite{bks}, Schramm and Steif~\cite{ss}, Garban, Pete and Schramm~\cite{gps}, and in the continuum: Ahlberg, Broman, Griffiths and Morris~\cite{abgm}, and Ahlberg, Griffiths, Morris and Tassion~\cite{qvp}.

\par In this paper we study threshold phenomena and the effect of small perturbations in the context of Poisson Voronoi percolation on $\mathbb{R}^2$. Our contributions in this direction are two-fold. First, we describe the discretization method developed in~\cite{abgm}, by which we reduce the continuum problem to its discrete counterpart, and emphasize the close relation between threshold phenomena and noise sensitivity of Boolean functions via the study of randomized algorithms. Combining the two techniques we derive quantitative estimates on the width of the threshold window and the rate of decorrelation in~\eqref{eq:ns_def}.
Second, we discuss a range of different but related notions of perturbations in the context of Voronoi percolation.
Some of these notions we examine in detail, whereas other are left as open problems.

\par We remark that the application of the discretization approach is here somewhat simpler than as originally developed in~\cite{abgm}. Moreover, the techniques we use apply to a range of continuum percolation models such as Poisson Boolean percolation and confetti percolation, as opposed to the approach in~\cite{qvp} that exploits colour-switching tricks. For self-dual models, such as Voronoi and confetti percolation, our approach offers an alternative proof that the critical probability for percolation equals $\sfrac12$, as originally proved by Bollob\'as and Riordan~\cite{br}. In addition, the quantitative estimates that we obtain on the size of the threshold window are new. We have chosen to present our results in terms of Voronoi percolation as this model offers a range of possibilities when it comes to different perturbations.

\bigskip

\par {\bf Description of Voronoi percolation.} Poisson Voronoi percolation is a model for the study of long-range connections in a two-colouring of $\RR^2$ based on a tessellation. The large-scale behaviour in models of this kind is well-known to be governed by its behaviour in finite regions, and we shall for this reason work with the restriction of the model to the unit square.
Let, hence, $S:=[0,1]^{2}$ and let $\Omega$ denote the space of finite subsets of $S\times\{0,1\}$, equipped with the Borel sigma algebra. Formally we construct a Voronoi configuration on $S$ based on a Poisson point process $\eta$ on $\Omega$ with intensity measure $n\lambda_S\otimes[p\delta_1+(1-p)\delta_0]$, where $\lambda_S$ denotes Lebesgue measure on $S$.

\par Given $\eta\in\Omega$, we define the Voronoi cell associated to $(x,u)\in\eta$ as
$$
V(x):=\big\{y\in S:\dist(y,x)\le\dist(y,x')\text{ for all }(x',u')\in\eta\big\},
$$
where $\dist$ denotes the Euclidean distance. 
Based on the tessellation we declare a point in $S$ \emph{red} or \emph{blue} depending on whether it is contained in the cell corresponding to a point in $\eta$ with $u$-coordinate 0 or 1, respectively.\footnote{It is not hard to see that, with probability one, every Voronoi cell is a closed bounded convex set. A point on the boundary of some set may belong to more than one cell, but no point of $S$ can belong to more than three cells. Besides, if two cells share a vertex, they share an entire edge. We can therefore ignore the fact that points on the boundary of two cells may be declared both red and blue.}
To rule out degenerate cases, we colour all points in $S$ red in the case that $\eta=\emptyset$.
We shall denote the associated measure by $\PP_{n,p}$, and we will occasionally suppress the subscript to ease the notation.

\newconstant{c:non_trivial}
\par Given a rectangle $R\subseteq S$, let $H_R$ denote the event defined by the existence of a continuous blue path crossing $R$ horizontally, and let $f_R:\Omega\to\{0,1\}$ denote the indicator of the event $H_R$. Conditioned on $\eta \neq \emptyset$, at $p=\sfrac{1}{2}$ the model is self-dual, meaning that the red and blue components are equi-distributed. Since any rectangle $R\subseteq S$ is either crossed horizontally by a blue path or vertically by a red path, it follows by symmetry that\footnote{Equality would here hold would it not be for the possibility that $\eta$ may be empty.}
$$
\PP_{n,\sfrac{1}{2}}[f_S=1]\to\sfrac{1}{2}.
$$
Indeed, the function $f_R$ is non-degenerate at $p=\sfrac12$ for any rectangle $R\subseteq S$: There exists a constant $\useconstant{c:non_trivial}>0$, depending only on the aspect ratio of $R$, such that
\begin{equation}\label{eq:non_trivial}
\useconstant{c:non_trivial} \leq \PP_{n,\sfrac{1}{2}}[f_{R}=1] \leq 1-\useconstant{c:non_trivial},
\end{equation}
uniformly in $n$. This was first proved by Tassion~\cite{tassion} for Voronoi percolation on $\RR^2$, and later extended in~\cite{qvp} to subsets of $\RR^2$ with boundary.
The box-crossing property in~\eqref{eq:non_trivial} is a typical critical phenomenon and a suggestive indication that the critical threshold for the existence of an unbounded connected blue component in Poisson Voronoi percolation on $\RR^2$ equals $\sfrac{1}{2}$.

\bigskip

\par {\bf Description of results.} In the continuum setting, a natural notion of perturbation of a Voronoi configuration is obtained as follows. For $\epsilon\in(0,1)$ let $\eta(\epsilon)$ be obtained from $\eta$ by first thinning $\eta$ by a factor $1-\epsilon$ and then sprinkling an independent density of $\epsilon n$ points to regain the initial density $n$. The collection of blue points in each $\eta$ and $\eta(\epsilon)$ is distributed as a Poisson point process of intensity $pn$. We shall say that the function $f_R:\Omega\to\{0,1\}$, encoding the existence of a horizontal blue crossing of the rectangle $R$, is {\bf noise sensitive} at level $p$ if, for every $\epsilon>0$, we have
\begin{equation}\label{eq:ns_voronoi}
\EE_{n,p}\big[f_{R}(\eta)f_{R}(\eta(\epsilon))\big]-\EE_{n,p}\big[f_{R}(\eta)\big]^{2} \to 0,\quad \text{as }n\to\infty.
\end{equation}
Moreover, we say that $f_R$ has {\bf positive noise sensitivity exponent} if~\eqref{eq:ns_voronoi} holds with $\epsilon$ replaced by $\epsilon_n=n^{-\alpha}$ for some $\alpha>0$.

\par Notice that in~\eqref{eq:ns_voronoi} we have defined what it means for a single function to be noise sensitive. The definition is nevertheless analogous to the one in~\eqref{eq:ns_def}, where a sequence of functions was considered.

\par Our first theorem states that box crossings in Poisson Voronoi percolation are noise sensitive at the critical parameter $p=\sfrac12$, and that the associated noise sensitivity exponent is positive.

\begin{theorem}\label{teo:ns}
For every rectangle $R \subseteq S$, the function $f_{R}$ is noise sensitive at level $p=\sfrac12$ with a positive noise sensitivity exponent.
\end{theorem}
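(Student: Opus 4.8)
The plan is to follow the now-standard route to noise sensitivity via randomized algorithms: first reduce the continuum problem to a discrete one through the discretization argument of~\cite{abgm}, then exhibit a randomized algorithm on the discrete model that determines the crossing event while revealing only a vanishing fraction of any fixed bit, and finally invoke the Schramm--Steif revealment theorem~\cite{ss} (together with the O'Donnell--Servedio--Saks--Wan refinement for the quantitative exponent) on the discrete side, transferring the conclusion back to the continuum. Concretely, I would fix a mesh of size $1/m$ with $m = m(n)$ a suitable polynomial in $n$, and approximate the Poisson process $\eta$ by recording, for each of the $m^2$ cells of the mesh, whether it contains a point of $\eta$ and, if so, the colour of one such point; for $m$ large relative to $n$ the probability that a cell contains two or more points, or that the discretized colouring differs from the true Voronoi colouring on the event $H_R$, is negligible. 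This produces a monotone (in the relevant sense, after the colour-reparametrization at $p=\sfrac12$) Boolean function $\tilde f$ on roughly $m^2$ bits whose crossing probability and noise correlation match those of $f_R$ up to $o(1)$ errors, and the continuum $\epsilon$-perturbation $\eta(\epsilon)$ corresponds to an $\epsilon'$-perturbation of the discrete bits with $\epsilon' \asymp \epsilon$.

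The core of the argument is then the algorithmic estimate. I would run an exploration algorithm for the left-to-right blue crossing of $R$ in the discretized model: pick a point (or a short vertical line) of entry on the left edge of $R$ uniformly at random, and reveal mesh cells one at a time, uncovering the Voronoi tessellation locally as needed to decide whether the interface/crossing proceeds, in the spirit of the interface-exploration algorithms used for Voronoi percolation in~\cite{tassion, qvp, abgm}. The key point is that to decide the colour at a given location one only needs to reveal the nearest few points of the process, so the set of mesh cells queried when processing a given location stays within a bounded (random, but with exponential tails) neighbourhood; combined with the random starting height, a classical argument shows that the probability a fixed mesh cell is ever queried is bounded by (number of cells the interface can visit near that cell) $\times$ (RSW-type bound on the probability the interface passes near it) $\times$ (entropy factor from the random start), which one can push down to $n^{-\delta}$ for some $\delta>0$ using the quasi-multiplicativity and polynomial decay of the relevant arm/crossing probabilities furnished by the box-crossing property~\eqref{eq:non_trivial}. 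This gives revealment $\delta(\tilde f) \le n^{-\delta}$.

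Plugging revealment into the Schramm--Steif inequality $\sum_k \Inf_k(\tilde f) \big(\text{weighted by level}\big) \lesssim$ — more precisely their bound $\sum_{k} \widehat{\tilde f}(S)^2 \le \ldots$ controlling low-frequency Fourier weight by $\delta(\tilde f)$ times the degree — yields that the Fourier spectrum of $\tilde f$ is concentrated on levels of size at least $n^{\delta/2}$, which is exactly noise sensitivity with a polynomial exponent: $\EE[\tilde f(\omega)\tilde f(\omega^{\epsilon_n})] - \EE[\tilde f]^2 \to 0$ for $\epsilon_n = n^{-\alpha}$ with any $\alpha < \delta/2$. Transferring back through the discretization (using that $\tilde f$ and $f_R$ agree except on an event of probability $o(1)$, uniformly under $\PP_{n,\sfrac12}$ and under the coupling of the two perturbations) gives~\eqref{eq:ns_voronoi} with $\epsilon_n = n^{-\alpha}$, which is Theorem~\ref{teo:ns}.

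The main obstacle I anticipate is the revealment bound, specifically controlling the geometry of the exploration: one must ensure that locally uncovering the tessellation never forces the algorithm to query cells far from the current interface, i.e. that the "region of dependence" of each decision has uniformly exponential tails independently of $n$ (this is where Voronoi percolation is genuinely harder than a lattice model, since cell sizes fluctuate), and one must combine this with sharp enough bounds on how close the interface comes to a fixed cell — which is precisely where the box-crossing estimate~\eqref{eq:non_trivial} and its consequences (quasi-multiplicativity of crossing probabilities in annuli) are essential. Handling the boundary of $R$ and of $S$ cleanly in the exploration, and making the discretization error estimate uniform over the noise coupling, are secondary but nontrivial technical points.
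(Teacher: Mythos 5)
Your high-level plan — discretize the continuum model, build a low-revealment algorithm, and invoke the Schramm--Steif revealment theorem — is the right one and is what the paper does. But there is a genuine gap in the discretization step, and it stems from a misremembering of what the discretization of~\cite{abgm} actually is. You propose partitioning $S$ into a polynomial number of mesh cells and recording, per cell, an occupancy bit plus a colour bit. That is not the construction of~\cite{abgm} (or of this paper). Both instead use a \emph{two-stage Poisson} construction: sample a denser process $\eta_k\sim\PP_{kn,p}$, obtain $\eta$ by an i.i.d.\ $\sfrac1k$-thinning, and, conditionally on $\eta_k$, view $\eta$ as an element of $\{0,1\}^{\eta_k}$. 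This is exact, not an approximation, and crucially the continuum $\epsilon'$-perturbation $\eta(\epsilon')$ corresponds \emph{precisely} to Bernoulli $\epsilon$-resampling of the presence bits (with $\epsilon=\epsilon'/(1-\sfrac1k)$), while the colour of a point is carried by $\eta_k$ and never resampled. Your lattice-mesh version does not enjoy this: under the thin-and-sprinkle noise, an occupied cell flips to empty with probability $\approx\epsilon$ but an empty cell flips to occupied only with probability $\approx\epsilon n/m^2$, which is not i.i.d.\ bit-resampling for the Bernoulli($n/m^2$) occupancy bits except in a limit; and the colour of a surviving point is unchanged, so the colour bit is not resampled at rate $\epsilon$ at all. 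The passage ``the continuum $\epsilon$-perturbation corresponds to an $\epsilon'$-perturbation of the discrete bits with $\epsilon'\asymp\epsilon$'' therefore hides an argument that is at best delicate and at worst false as stated, and it is exactly what the two-stage construction is designed to avoid. You are also missing the term that the two-stage construction forces you to handle: the decomposition (eq.~\eqref{eq:discretization}) produces an extra $\var_{kn,\sfrac12}(\EE[f_R(\eta)\mid\eta_k])$, which is controlled by a separate argument (Lemma~\ref{lemma:variance_decay}, giving a $\sfrac1k$ bound).

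On the algorithmic side your proposal is closer to the paper, modulo details. The paper does not trace a single interface from a random boundary point; it picks a uniformly random vertical line through the middle third of $R$, reveals all mesoscopic cells meeting it, and then grows outward through all blue clusters touching that line, with a ``safe cell'' bookkeeping device (reveal a cell and its eight neighbours; if any ninth-subcell is empty of $\eta$, abort and reveal everything) that makes the locality issue you flag rigorous. The revealment bound then comes from exactly the ingredients you name: a large-cell estimate (Lemma~\ref{lemma:large_cell}) plus a one-arm bound built from the box-crossing inequality~\eqref{eq:non_trivial} across a logarithmic number of disjoint annuli (Proposition~\ref{prop:one_arm}), combined with a $\sqrt m$-type bound for cells near the random line. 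So the algorithm and revealment analysis are essentially as you envisioned; the part you need to repair is the discretization, and the repair is to replace your lattice-mesh scheme by the thinning construction you yourself cite.
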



\par Our second result concerns the width of the threshold (or critical) window, as a function of $n$, in which the probability of a horizontal blue crossing is bounded away from zero and one. That the width tends to zero with $n$ is the essence of Bollob\'as and Riordan's proof that the critical probability for Poisson Voronoi percolation on $\RR^2$ equals $\sfrac{1}{2}$. We show that the width of the critical window tends to zero polynomially in $n$, and hence provide an alternative proof of Bollob\'as and Riordan's theorem.

\begin{theorem}\label{teo:tw}
For every rectangle $R \subseteq S$ there exists $\gamma>0$ such that
\begin{equation*}
\lim_{n\to\infty}\PP_{n,\sfrac{1}{2}-n^{-\gamma}}[f_{R}=1]= 0
\quad\text{and}\quad
\lim_{n\to\infty}\PP_{n,\sfrac{1}{2}+n^{-\gamma}}[f_{R}=1]= 1.
\end{equation*}
\end{theorem}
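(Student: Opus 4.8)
The plan is to deduce the polynomial threshold window from a Margulis--Russo type differential inequality, where the derivative of $p \mapsto \PP_{n,p}[f_R=1]$ is controlled in terms of a revealment quantity coming from a randomized algorithm, exactly as in the Boolean setting. First I would invoke the discretization argument outlined in the introduction (following \cite{abgm}): approximate the Poisson Voronoi configuration by a configuration on a fine grid so that $f_R$ becomes a monotone Boolean function $g$ of roughly $N \approx n \cdot \mathrm{poly}(n)$ bits, each bit being blue with probability $p$, and so that $\PP_{n,p}[f_R=1]$ and $\PP_p[g=1]$ differ by a negligible amount uniformly in the relevant range of $p$. The monotonicity in $p$ is genuine here because the colours are assigned independently of the point locations, so adding blue points only helps horizontal blue crossings.

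The heart of the matter is to show that there is an $\alpha>0$ and a randomized algorithm determining $g$ whose \emph{revealment} $\delta_N$ (the maximum over bits of the probability that the bit is queried) satisfies $\delta_N \le N^{-\alpha}$, uniformly for $p$ in a neighbourhood of $\sfrac12$. This is precisely the kind of estimate that also drives Theorem \ref{teo:ns} via the Schramm--Steif \cite{ss} inequality, so I expect the same algorithm --- a random-target or radial-exploration algorithm that reveals cells along an interface starting from a uniformly chosen point or line --- to do the job, with the box-crossing bound \eqref{eq:non_trivial} of Tassion \cite{tassion} (extended in \cite{qvp}) supplying the quasi-multiplicativity / RSW input needed to bound the length of the explored interface by a polynomial power of $N$. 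The key point is that \eqref{eq:non_trivial} holds uniformly in $n$, and more importantly the associated one-arm and interface-length estimates hold with power-law bounds, so that the expected number of revealed cells is $N^{1-\alpha'}$, giving revealment $N^{-\alpha'}$ after dividing by $N$.

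Granting such an algorithm, the Margulis--Russo formula combined with the O'Donnell--Saks--Schramm--Servedio inequality (or, in the form used by \cite{ss}, a bound on the sum of influences in terms of revealment and variance) yields
\begin{equation*}
\frac{d}{dp}\,\PP_p[g=1] \;\geq\; \frac{c}{\delta_N}\,\PP_p[g=1]\big(1-\PP_p[g=1]\big)
\;\geq\; c\,N^{\alpha}\,\PP_p[g=1]\big(1-\PP_p[g=1]\big),
\end{equation*}
valid uniformly for $p$ in a fixed neighbourhood of $\sfrac12$. Integrating this differential inequality from $\sfrac12$ outward, and using \eqref{eq:non_trivial} to anchor $\PP_{\sfrac12}[g=1]$ away from $0$ and $1$, forces $\PP_p[g=1]$ to escape every fixed neighbourhood of $\sfrac12$ (tending to $0$ below and $1$ above) once $|p-\sfrac12|$ exceeds a constant multiple of $N^{-\alpha}$; since $N$ is polynomial in $n$, this gives $|p - \sfrac12| \ge n^{-\gamma}$ for a suitable $\gamma>0$. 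Transferring back through the discretization (whose error is negligible uniformly on this range) yields the two limits in the statement.

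The main obstacle, as I see it, is establishing the power-law revealment bound uniformly in a neighbourhood of $p=\sfrac12$ rather than only at $p=\sfrac12$: the RSW/box-crossing technology of \cite{tassion, qvp} is a critical-point statement, so one must either run the algorithm at $p=\sfrac12$ and control how its revealment degrades under the change of measure to nearby $p$ (a standard but slightly delicate argument, since the window $N^{-\alpha}$ is exactly where the change of measure is still harmless), or verify that the near-critical box-crossing estimates persist on the scale of the correlation length. I would handle this by only ever needing the algorithm's revealment at $p=\sfrac12$ together with the elementary bound that the density of points changes by a factor $1+O(N^{-\alpha})$, which perturbs all relevant probabilities by a multiplicative constant over the whole window; the discretization scale $N$ should be chosen large enough (a fixed power of $n$) that this is quantitatively consistent.
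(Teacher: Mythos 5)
Your high-level strategy matches the paper's: a Margulis--Russo plus OSSS differential inequality of the form $\tfrac{d}{dp}\PP_p[\cdot]\gtrsim \var_p(\cdot)/\delta_p(\mathcal A)$, a polynomial bound on the revealment via one-arm estimates supported by \eqref{eq:non_trivial}, and integration to squeeze the threshold window. That part is right. But the middle of your argument --- the ``discretization'' --- is where the actual work is, and what you write there does not match the construction of \cite{abgm} that you say you are invoking, and it hides a step that does not hold as stated.

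The paper does not approximate $\eta$ by a grid with ``each bit blue with probability $p$''. It uses the two-stage construction of Section~\ref{sec:cont_to_disc}: sample a denser Poisson configuration $\eta_k\sim\PP_{kn,p}$, condition on its projection $\overline\eta_k$ to $S$, and regard $\eta$ as an element of $\{0,1\}^{\overline\eta_k}\times\{0,1\}^{\overline\eta_k}$, where one block of bits encodes \emph{colour} (Bernoulli $p$) and the other encodes \emph{presence in the thinning} (Bernoulli $\sfrac1k$). The algorithm $\mathcal A$ of Section~\ref{sec:algorithms} queries \emph{presence}, so its revealment is naturally measured at parameter $\sfrac1k$, while Russo's formula differentiates in $p$ and therefore produces \emph{colour}-pivotal probabilities. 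The crux of the proof, which your sketch omits, is the bridge between these two: since for a monotone crossing function ``a blue point is better than no point, and no point is better than a red point'', a present point whose colour is pivotal is in particular a point whose presence is pivotal, so colour-pivotal sums dominate presence-pivotal ones. Only after this monotonicity trick can one feed the presence-pivotal sum into OSSS (at parameter $\sfrac1k$) and get the quantitative lower bound on $\tfrac{d}{dp}\PP_{n,p}[f_R=1]$. Without it, the Russo formula and the OSSS inequality live on two different Boolean coordinate systems and simply do not compose. If instead you really mean to condition on all positions and treat the colours themselves as the Boolean variables, then you are in the setting of \cite{qvp} --- the revealment bound becomes a \emph{quenched} one-arm statement over the random tessellation, which is a much harder input and is precisely what this paper's approach is designed to avoid.

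Your proposed fix for uniformity of the revealment in a $p$-neighbourhood of $\sfrac12$ also does not go through. A change of measure from $\PP_{\sfrac12}$ to $\PP_p$ on $N$ bits has a log-likelihood ratio with fluctuations of order $\sqrt{N}\,|p-\sfrac12|$, which for $|p-\sfrac12|\asymp N^{-\alpha}$ with $\alpha<\sfrac12$ is unbounded; the perturbation is not ``harmless over the whole window'', it is catastrophic there. The paper avoids this entirely: the blue one-arm probability is monotone in $p$, so Propositions~\ref{prop:one_arm} and~\ref{prop:revealment} are stated for $p\le\sfrac12$ directly, and for $p\ge\sfrac12$ one runs the dual algorithm $\mathcal A'$ that explores red vertical crossings, obtaining the symmetric bound. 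The events $A$ and $B$ in the proof of Theorem~\ref{teo:tw} are then controlled with constant probability uniformly over the window, and the integration goes through. I would rework your argument to follow the two-stage thinning construction, insert the colour-to-presence pivotality comparison explicitly, and replace the change-of-measure heuristic with the monotonicity-plus-dual-algorithm argument.
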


We remark that the width of the critical window cannot decay faster than order $\sfrac{1}{\sqrt{n}}$ due to the well-known fact that no sequence of monotone Boolean functions may have a smaller threshold window; see e.g.~\cite{ahlste}. We mention that in parallel work, Duminil-Copin, Raoufi and Tassion~\cite{dumcopraotas2} present yet another proof of the Bollob\'as-Riordan theorem. We also mention that the existence of a threshold at $p=\sfrac{1}{2}$, together with Cauchy-Schwarz' inequality, implies that Voronoi percolation is trivially noise sensitive for $p\neq\sfrac{1}{2}$.

\par One way to think of the perturbation in~\eqref{eq:ns_voronoi} is as the following dynamical process evolving in time: Let points appear in $S\times\{0,1\}$ at rate $n$, where they remain for an exponentially distributed time before disappearing. The measure $\PP_{n,\sfrac12}$ is stationary for this process, and for $\epsilon=1-e^{-t}$ the pair $(\eta,\eta(\epsilon))$ corresponds to the dynamical process observed at times $0$ and $t$.

\par In greater generality we may think of a perturbation as a reversible time-homogeneous Markov process $(\eta(t))_{t\ge0}$ on $\Omega$ evolving in equilibrium. For each such process, the Markov property and reversibility together give that
\begin{equation*}
\begin{aligned}
&\EE\big[f_R(\eta(0))f_R(\eta(t))\big]-\EE\big[f_R(\eta(0))\big]^2\\
&\quad=\,\EE\Big[\EE\big[f_R(\eta(0))\big|\eta(\sfrac t2)\big]\EE\big[f_R(\eta(t))\big|\eta(\sfrac t2)\big]\Big]-\EE\big[f_R(\eta(0))\big]^2\\
&\quad=\,\var\Big(\EE\big[f_R(\eta(\sfrac t2))\big|\eta(0)\big]\Big).
\end{aligned}
\end{equation*}
Hence, for each dynamical process of this kind, the correlation between two points in time measures the amount of information in some sigma algebra $\mathcal{F}$ -- the sigma algebra generated by the glimpse of the process in one of the time points -- and being sensitive with respect to this information is equivalent to
\begin{equation}\label{eq:general}
\var_{n,\sfrac12}\big(\EE[f_R(\eta)|\mathcal{F}]\big)\to0,\quad\text{as }n\to\infty.
\end{equation}
Clearly, the more information contained in $\mathcal{F}$ the larger the variance. This indicates, in particular, that more conservative dynamics tend to affect a system to a lesser extent. Two natural notions of perturbations that conserve the number of points are
\begin{itemize}\itemsep1pt \parskip0pt \parsep0pt
\item re-randomize colours of a small proportion of points;
\item re-randomize locations of a small proportion of points.
\end{itemize}
The former of these two notions was studied in~\cite{qvp}, where the authors showed that the existence of crossings in Voronoi percolation are sensitive with respect to resampling a small proportion of the colours. The latter we study in this paper, and show that Voronoi crossings are sensitive also with respect to relocation of points within $S$.

\begin{theorem}\label{teo:other_noises}
Let $\eta^*$ be obtained from $\eta$ by re-randomizing the location of each point in $\eta$ independently and uniformly within $S$ with probability $\epsilon>0$. For every $\epsilon>0$ and rectangle $R\subseteq S$, we have
$$
\EE_{n,\sfrac12}\big[f_R(\eta)f_R(\eta^*)\big]-\EE_{n,\sfrac12}\big[f_R(\eta)\big]^2\to0,\quad\text{as }n\to\infty.
$$
\end{theorem}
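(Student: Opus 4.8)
The plan is to deduce Theorem~\ref{teo:other_noises} from the same algorithmic machinery that yields Theorem~\ref{teo:ns}, rather than to analyze the relocation dynamics from scratch. The key observation is that resampling the location of a point within $S$ can be coupled with the ``kill-and-replace'' perturbation $\eta \mapsto \eta(\epsilon)$ underlying~\eqref{eq:ns_voronoi}: re-randomizing the position of a point is, in distribution, the same as deleting that point and adding a fresh independent uniform point carrying the \emph{same} colour. Under the measure $\PP_{n,\sfrac12}$ the colour of a point is independent of its location, so resampling position only (keeping colour) and resampling both position and colour (i.e.\ fully replacing the point by an independent one) produce configurations that differ only in whether one further re-randomizes a colour that was already independent and uniform. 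Thus $\eta^*$ can be obtained from a $\poisson$-style perturbation followed by, or preceded by, a colour-resampling step, each affecting only an $\epsilon$-fraction of points. Since the colour-resampling perturbation is exactly the one treated in~\cite{qvp}, and since composing two noise-sensitive perturbations is again noise sensitive, it will suffice to show that the pure relocation perturbation is itself noise sensitive, or alternatively to realize $\eta^*$ as a perturbation of the form covered by the general criterion~\eqref{eq:general}.

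Concretely, I would proceed as follows. First, I set up the randomized algorithm: following the discretization argument from~\cite{abgm} and the proof of Theorem~\ref{teo:ns}, one tiles $S$ by a fine mesh of scale roughly $n^{-1/2}$ (up to logarithmic corrections), and the status of $f_R$ is determined by an algorithm that reveals the Poisson configuration cell by cell along an exploration path, so that every individual bit (the presence/colour of a point in a small box) has small \emph{revealment}. The Schramm--Steif theorem then bounds the low-level Fourier weight of $f_R$ by the revealment, which tends to zero. Second, I note that for the relocation perturbation the relevant ``glimpse'' sigma algebra $\mathcal F$ in~\eqref{eq:general} is generated by the set of points \emph{not} selected for relocation together with the relocated points' colours and the un-relocated points' full data — in particular $\mathcal F$ is contained in a sigma algebra on which $f_R$ depends through its Fourier--Walsh expansion with the low-frequency part suppressed. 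Third, I combine these: the variance $\var_{n,\sfrac12}(\EE[f_R \mid \mathcal F])$ is controlled, up to the usual discretization error, by the sum of squared Fourier coefficients of $f_R$ over frequency sets that survive the relocation, each such coefficient being damped by a factor that is small whenever the corresponding set has more than a bounded number of elements; the contribution of small sets vanishes by the revealment bound, and the contribution of large sets vanishes because each surviving large set is damped. This is the standard Benjamini--Kalai--Schramm argument adapted to a non-product, location-based noise, exactly as in~\cite{qvp} for colour noise.

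The main obstacle, as in the colour-noise case, is that the relocation perturbation does not act independently coordinate-by-coordinate on the discretized bits: moving a point from one mesh box to another simultaneously changes two bits in a correlated way, so the clean product-measure Fourier analysis of~\cite{bks,ss} does not apply verbatim. I would handle this by passing through the discretization carefully: on the fine mesh, with high probability each occupied box contains at most one point, and relocation within $S$ corresponds, at the level of bits, to independently (with probability $\epsilon$) erasing a bit and re-inserting it at a uniformly chosen box. One then shows that this operation is dominated, in the sense of noise, by an honest $\epsilon'$-resampling of each bit for some $\epsilon'$ comparable to $\epsilon$ (losing at most constants), so that the Schramm--Steif / BKS bounds go through with $\epsilon$ replaced by $\epsilon'$. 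The remaining steps — controlling the error introduced by the discretization, and verifying that the revealment of the algorithm is genuinely $o(1)$ — are identical to those already carried out in the proof of Theorem~\ref{teo:ns}, and can be quoted. Finally, I would remark that, unlike Theorem~\ref{teo:ns}, here we only claim sensitivity for fixed $\epsilon>0$ (not a positive exponent), because the relocation-to-resampling comparison is lossy; obtaining a quantitative exponent would require tracking the interplay between the relocation radius and the mesh scale more delicately, which we leave aside.
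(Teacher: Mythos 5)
Your first paragraph has the right instinct — couple $\eta^*$ with $\eta(\epsilon)$ — but the coupling you propose misses the actual source of discrepancy, and the Fourier-damping argument you then pursue has a genuine gap. The relevant difference between $\eta^*$ and $\eta(\epsilon)$ is \emph{not} an extra colour-resampling (under $\PP_{n,\sfrac12}$ colours are uniform and independent, so resampling a fresh point's colour changes nothing); it is a \emph{count} discrepancy. In $\eta^*$ the number of points of each colour is conserved exactly, whereas in $\eta(\epsilon)$ the thinned and sprinkled portions are independent Poissons, so the total fluctuates by order $\sqrt{n}$. Consequently $\eta^*$ cannot be realized as ``$\eta(\epsilon)$ followed by a colour resampling''; the two perturbations differ by an $O(\sqrt{n})$-sized symmetric difference of points, and some extra argument is needed to show this difference is immaterial for $f_R$.

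Your attempt to close the problem by arguing that relocation noise is ``dominated, in the sense of noise, by an honest $\epsilon'$-resampling of each bit'' is where the proposal breaks down. Relocating a point is a conservative move: at the level of discretized bits it turns one bit off and another on \emph{in a correlated way}, preserving the Hamming weight. There is no theorem in the BKS/Schramm–Steif framework asserting that such a correlated, conservative noise operator is spectrally dominated by product noise; indeed, whether exclusion-type dynamics enjoy the same sensitivity as independent noise is precisely the open problem of Broman–Garban–Steif (cited in the paper as~\cite{exclusion_sensitivity}) and the analogue the paper flags as open for local relocations. Asserting domination ``losing at most constants'' is not a lemma you can quote or easily prove, and the whole point of the paper's construction is to avoid this. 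Moreover your reduction in the first paragraph — ``it will suffice to show that the pure relocation perturbation is itself noise sensitive'' — is circular.

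What the paper actually does is different and resolves exactly the gap above. It constructs a single triple $(\eta',\eta'',\eta''')$ so that $(\eta',\eta'')$ has the law of $(\eta,\eta(\epsilon))$, $(\eta',\eta''')$ has the law of $(\eta,\eta^*)$, and the \emph{second coordinates} $\eta''$ and $\eta'''$ agree except on an exchangeable set of points of symmetric-difference size $O(\sqrt{n})$ with high probability. It then invokes a square-root stability result (Proposition~\ref{prop:srs}, built on an adaptation of Lemma~6.1 of Broman–Garban–Steif via Lemma~\ref{lemma:discrete_srs_2}, itself fed by the revealment bound and~\eqref{eq:influence_upper_bound}) to conclude $\PP[f_R(\eta'')\neq f_R(\eta''')]\to 0$, and finishes with Theorem~\ref{teo:ns} and a triangle inequality. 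The ingredient you are missing is this square-root stability step: it is what lets one compare a conservative perturbation to the product-noise one without ever needing a spectral domination statement for relocation.
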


We remark that the statement of the above theorem remains true for $\epsilon$ replaced by $\epsilon_n=n^{-\alpha}$ for some $\alpha>0$, which is a direct consequence of $f_R$ having a positive noise sensitive exponent (see Theorem~\ref{teo:ns}).

\bigskip

\par \textbf{Open problems.} The discussion above, in which a perturbation was described in terms of a reversible Markov process evolving in equilibrium, suggests that there is a whole range of possible perturbations, apart from those considered above. Different notions of perturbations are likely to require new ideas and additional techniques than those explored here. One open problem, for which the techniques of this study are insufficient, is to determine whether percolation crossings are sensitive with respect to relocation of (a proportion of) points of a given colour, while points of the other colour are kept fixed. We believe that this is the case, and motivate our belief with the fact that crossings in Bernoulli percolation on $\ZZ^2$ are sensitive to resampling of vertical bonds, while horizontal bonds are kept fixed; see~\cite{gps}.

\par Another notion of perturbation is obtained by perturbing the location of each point according to independent Brownian motions run for some time $t=t(n)$. Running the Brownian motions for time $\sfrac1n$ displaces a typical point on the order $\sfrac{1}{\sqrt{n}}$, which is the typical distance between two neighbouring points. At this time scale the perturbation is thus much more local than when completely resampling the locations. We conjecture that for $t(n)=\sfrac{\epsilon}{n}$ the points are perturbed enough to lose (in the sense of~\eqref{eq:general}) the information carried by the initial configuration regarding the existence of a horizontal blue crossing. An analogous statement has been conjectured to hold in the discrete setting, but remains unproven~\cite{exclusion_sensitivity}.

\par The perturbation considered in Theorem~\ref{teo:other_noises} could of course be made more local by relocating each point, not uniformly over the whole square, but uniformly over some smaller square of side length $n^{-\alpha}$, where $\alpha\in(0,1/2]$. It is possible that the argument used to prove Theorem~\ref{teo:other_noises} could be adapted in the case that $\alpha>0$ is sufficiently small, but coming down to the scale $\sfrac{1}{\sqrt{n}}$ will presumably require significant new ideas. Again, see~\cite{exclusion_sensitivity} for related work in a discrete setting.

\par In contrast to these open problems, we mention that Benjamini and Schramm~\cite{bensch98} have proved that Voronoi percolation in two and three dimensions is stable with respect to a (non-random) conformal perturbation of the underlying Euclidean metric that defines the tessellation. The result of Benjamini and Schramm is related to conformal invariance, which is believed to hold for Voronoi percolation, just as for many other planar percolation models. This is one important missing piece that remains in order to determine critical exponents by means of SLE technology. With such techniques at hand, one would be able to determine the width of the critical window and the (optimal) noise sensitivity exponent precisely. The latter should further have implications for the existence of exceptional times in certain dynamical versions of the processes considered here. We refer the reader to~\cite{gps,book_ns} for further discussion in this direction.

\bigskip

\par \textbf{Proof overview.} We will follow the approach developed in~\cite{abgm}, and revisited in~\cite{att}, by which the continuum problem is reduced to its discrete counterpart via a two-stage construction. The central idea is to consider a Poisson point process $\eta_k$ on $\Omega$ chosen according to $\PP_{kn,p}$ for some $k\ge1$, and obtain a configuration $\eta$ from $\eta_k$ via thinning. Conditional on $\eta_k$, we may think of $\eta$ as an element in, and $f_R$ as a function on, $\{0,1\}^{\eta_k}$. Conditional on $\eta_k$, we will be able to study the behaviour of $f_R$ via techniques developed for the analysis of Boolean functions.

\par Russo's approximate 0-1 law says that any sequence of monotone Boolean functions for which the influence of each bit tends to zero exhibits a threshold behaviour~\cite{russo82}. A more modern approach to threshold phenomena comes from randomized algorithms via the OSSS inequality~\cite{osss}. That randomized algorithms can be used to study threshold phenomena has previously been observed by Gady Kozma (see the appendix of~\cite{ahlste}) and in recent work by Duminil-Copin, Raoufi and Tassion~\cite{dumcopraotas2,dumcopraotas1}.
Randomized algorithms are also connected to noise sensitivity via the Schramm-Steif revealment theorem~\cite{ss}. In order to prove Theorems~\ref{teo:ns} and~\ref{teo:tw} we shall thus devise an algorithm that, conditional on $\eta_k$, queries points in $\eta_k$ sequentially until the outcome of $f_R(\eta)$ is determined. If, with high probability, the algorithm has low revealment, that is, is unlikely to query any specific point in $\eta_k$, then the results will follow.

\par The proof of Theorem~\ref{teo:other_noises} will also rely on the reduction to a discrete setting.
The dynamical process studied there is conservative, and in that sense related to the concept of exclusion sensitivity studied by Broman, Garban and Steif~\cite{exclusion_sensitivity}. We shall follow their approach, and instead of a direct study of the conservative dynamics, we shall show that there is a coupling between $(\eta,\eta(\epsilon))$ and $(\eta,\eta^*)$ such that $(f_R(\eta),f_R(\eta(\epsilon)))$ and $(f_R(\eta),f_R(\eta^*))$ agree with high probability. This will be possible due to a result in~\cite{exclusion_sensitivity} which says that any noise sensitive sequence of Boolean functions $(f_n)_{n\ge1}$ is unlikely to change when resampling up to order $\sqrt{n}$ of the variables. The result then follows by Theorem~\ref{teo:ns} and the observation that
\begin{equation*}
\Big|\EE_{n,p}\big[f_{R}(\eta)f_{R}(\eta(\epsilon))\big]-\EE_{n,p}\big[f_{R}(\eta)f_{R}(\eta^*)\big]\Big|  \,\leq\, \PP_{n,p}\big[ f_{R}(\eta(\epsilon)) \neq f_{R}(\eta^*)\big].
\end{equation*}

\bigskip

\textbf{Structure of the paper.} Tools and techniques from the analysis of Boolean functions will be central in the remainder of this paper. We shall in Section~\ref{sec:ds} begin with a brief review of these, centering on the use of randomized algorithms and their revealment. In Section~\ref{sec:cont_to_disc} we outline the discretization method developed in~\cite{abgm}, which will allow for these techniques to be applied in the setting of Voronoi percolation.
In Section~\ref{sec:algorithms} we describe an algorithm that will be used to prove Theorems~\ref{teo:ns} and~\ref{teo:tw}, and estimate its revealment. The proofs of Theorems~\ref{teo:ns} and~\ref{teo:tw} are then given in Section~\ref{sec:nstw}, and Sections~\ref{sec:srt} and~\ref{sec:fp} are dedicated to study the effect of alternative perturbations, and to prove Theorem~\ref{teo:other_noises}.

\bigskip

\textbf{Acknowledgements.} The authors thank Augusto Teixeira for valuable discussions. DA thanks Swedish Research Council for financial support through grant 637-2013-7302. RB thanks FAPERJ for financial support through grant E-26/202.231/2015.

\section{Analysis of Boolean functions}\label{sec:ds}

\par In the analysis of Boolean functions, discrete Fourier techniques have become an indispensable tool. Although phenomena such as sharp thresholds and noise sensitivity can be directly linked to the spectrum of the Fourier-Walsh decomposition of a Boolean function, it is often a very challenging task to obtain precise estimates on the spectrum itself. A range of techniques have therefore been developed in order to relate such phenomena to notions such as influence of variables and revealment of algorithms, which are typically more tractable quantities to estimate.

\par In this section, we review some results connecting influences and revealment to threshold behaviour and noise sensitivity. We shall avoid the discussion of Fourier techniques, that lie behind several of the results we describe, and refer the reader to the books~\cite{book_ns} and~\cite{odonnell} for a more extensive treatment.

\subsection{Influence of variables}\label{subsec:influences}

\par Let $[n]:=\{1,2,\ldots,n\}$ and $f:\{0,1\}^n\to\{0,1\}$ be a Boolean function. The {\bf influence} of bit $k\in[n]$ for $f$ is defined as
\begin{equation}\label{eq:influence}
\Inf_{k}^{p}(f)=\Inf_k^p(f,[n]):=\PP_{p}[f(\omega) \neq f(\sigma_{k}\omega)],
\end{equation}
where $\sigma_{k}$ is the operator that changes $\omega$ at position $k$ from $\omega_k$ to $1-\omega_k$. Recall that a Boolean function is called monotone if $f(\omega')\ge f(\omega)$ whenever $\omega'_k\ge\omega_k$ for each $k\in[n]$. It is well-known that many monotone Boolean functions exhibit a threshold phenomenon, where the probability $\PP_p[f=1]$ increases from close to 0 to close to 1 in a narrow window -- the threshold window. 
The central role of influences in the understanding of this phenomenon is emphasized by the Margulis-Russo formula. It says that, for any monotone function $f:\{0,1\}^n\to\{0,1\}$,
\begin{equation}\label{eq:russo_formula}
\frac{d}{dp}\PP_{p}[f=1]=\sum_{k=1}^{n}\Inf_{k}^{p}(f).
\end{equation}

\par
Russo's approximate 0-1 law~\cite{russo82} gives the first general condition for the existence of a threshold. Russo showed that for every $\epsilon>0$ there exists $\delta>0$ such that if $\Inf_k^p(f)\le\delta$ uniformly in $k$ and $p$, then $\PP_p(f=1)$ transitions from below $\epsilon$ to above $1-\epsilon$ in a window of width at most $\epsilon$. Later works~\cite{kkl,fk,tal} have obtained a more precise formulation of Russo's theorem that allows one to get a quantitative bound on the width of the threshold window.

\par Influences are likewise fundamentally connected to the notion of noise sensitivity. The BKS Theorem, due to Benjamini, Kalai and Schramm~\cite{bks}, says that a sufficient condition for a sequence $(f_n)_{n\ge1}$ of Boolean functions to be noise sensitive at level $p$ is that
\begin{equation}\label{eq:inf_square}
\sum_{k=1}^n\Inf_k^p(f_n)^2\to0\quad\text{as }n\to\infty.
\end{equation}
For monotone functions this condition is also necessary.

\subsection{Revealment of algorithms}\label{subsec:revealment}

\par A (randomized) algorithm is a rule which queries a subset of the bits of $\omega\in\{0,1\}^n$ in a random order, which is allowed to depend on what has been seen so far, and outputs either 0 or 1. An algorithm is said to {\bf determine} $f$ if its output equals $f(\omega)$ for each $\omega\in\{0,1\}^n$. The {\bf revealment} of an algorithm $\mathcal{A}$ with respect to $K \subseteq [n]$ is defined as
\begin{equation}\label{eq:revelament}
\delta_{p}(\mathcal{A},K):=\max_{k \in K}\PP_{p}[\mathcal{A} \text{ queries bit } k].
\end{equation}

\par In order to verify the condition in~\eqref{eq:inf_square}, Benjamini, Kalai and Schramm~\cite{bks} devised a method involving algorithms. This method was developed further in later work by Schramm and Steif~\cite{ss}. In essence, this method shows that a sequence of functions is noise sensitive if there exists (a sequence of) algorithms that determines $f_n$ without being likely to query any specific bit. The next proposition, due to Schramm and Steif~\cite{ss}, gives an explicit formulation of this last statement.

\begin{proposition}\label{prop:ssrt}
Let $\mathcal{A}$ be an algorithm that determines the function $f: \{0,1\}^{n} \to \{0,1\}$. Then, for every $p\in(0,1)$ and $m\ge1$, we have
\begin{equation*}
\EE_{p}[f(\omega)f(\omega^{\epsilon})]-\EE_{p}[f(\omega)]^{2} \leq e^{-\epsilon m}+m^{2}\delta_{p}(\mathcal{A}, [n]).
\end{equation*}
\end{proposition}

\par Since the correlation is non-negative, it is immediate from the proposition above that a sequence $(f_n)_{n\ge1}$ is noise sensitive if there exists an algorithm $\mathcal{A}$ determining $f_n$ with revealment tending to zero. Moreover, if $\delta_p(\mathcal{A},[n])$ decays polynomially fast, then the sequence $(f_n)_{n\ge1}$ has positive noise sensitivity exponent.

\par Randomized algorithms have also been related to influences and threshold phenomena via the following inequality, due to O'Donnell, Saks, Schramm and Servedio~\cite{osss}. 

\begin{proposition}\label{prop:osss}
Let $f:\{0,1\}^{n} \to \{0,1\}$ be a Boolean function and $\mathcal{A}$ an algorithm that determines $f$. Then, for every $p\in(0,1)$, we have
\begin{equation}
\var_{p}(f) \leq p(1-p)\sum_{k \in [n]} \delta_{p}(\mathcal{A},k)\Inf_{k}^{p}(f).
\end{equation}
\end{proposition}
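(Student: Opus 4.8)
The final statement to prove is Proposition~\ref{prop:osss} (the OSSS inequality). Here is how I would approach it.

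\medskip

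\textbf{Proof proposal.} The plan is to bound $\var_p(f)$ by writing it as a sum over the steps of the algorithm and using the martingale structure associated to the order in which bits are queried. First I would set up notation: run the algorithm $\mathcal{A}$, let $J_1, J_2, \ldots$ be the (random) sequence of bits it queries, and for $t \ge 0$ let $\mathcal{F}_t$ be the sigma-algebra generated by the first $t$ queries together with the observed values $\omega_{J_1}, \ldots, \omega_{J_t}$ (and the algorithm's internal randomness). Since $\mathcal{A}$ determines $f$, the value $f(\omega)$ is $\mathcal{F}_\tau$-measurable, where $\tau$ is the (bounded, since $\tau \le n$) stopping time at which the algorithm halts. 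Thus $M_t := \EE_p[f(\omega) \mid \mathcal{F}_t]$ is a martingale with $M_0 = \EE_p[f]$ and $M_\tau = f(\omega)$, so
\begin{equation*}
\var_p(f) = \EE_p\big[(M_\tau - M_0)^2\big] = \sum_{t \ge 1} \EE_p\big[(M_t - M_{t-1})^2\big],
\end{equation*}
by orthogonality of martingale increments. The key observation is that the increment $M_t - M_{t-1}$ is nonzero only because of the freshly revealed bit $\omega_{J_t}$: conditionally on $\mathcal{F}_{t-1}$ the next queried index $J_t$ is determined, and $M_t - M_{t-1} = \EE_p[f \mid \mathcal{F}_{t-1}, \omega_{J_t}] - \EE_p[f \mid \mathcal{F}_{t-1}]$.

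\medskip

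Next I would estimate each term $\EE_p[(M_t - M_{t-1})^2]$. Conditionally on $\mathcal{F}_{t-1}$, the index $j = J_t$ is fixed and $\omega_j$ is an independent Bernoulli$(p)$ bit (the unqueried bits are still fresh product measure given the history). A one-variable computation gives that the conditional variance of $\EE_p[f \mid \mathcal{F}_{t-1}, \omega_j]$ equals $p(1-p)$ times the square of the "discrete derivative" of the conditional expectation of $f$ in direction $j$, i.e.\ roughly $p(1-p)\big(\EE_p[f \mid \mathcal{F}_{t-1}, \omega_j = 1] - \EE_p[f \mid \mathcal{F}_{t-1}, \omega_j = 0]\big)^2$. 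Since $f$ is $\{0,1\}$-valued, this derivative (conditioned further on the untouched coordinates) is bounded by the indicator that bit $j$ is pivotal; taking expectations and summing over $t$, and using that for each fixed $k$ the event $\{J_t = k \text{ for some } t\}$ is exactly $\{\mathcal{A} \text{ queries } k\}$, one reorganizes the double sum over $(t,k)$ into $\sum_k \delta_p(\mathcal{A},k) \cdot (\text{influence-type term for } k)$. The clean way to carry this out is via the Efron–Stein / Fourier identity $\var_p(f) = \sum_k \EE_p\big[\var(f \mid \omega_{[n]\setminus k})\big]$-type bookkeeping, but the martingale route above keeps the revealment probabilities transparent, so I would push the increment estimate until it literally reads $\EE_p[(M_t - M_{t-1})^2] \le p(1-p)\,\EE_p\big[\charf{\mathcal{A} \text{ queries } J_t}\, (\nabla_{J_t} f)^2\big]$ and then sum.

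\medskip

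The step I expect to be the main obstacle is the passage from the conditional one-variable variance to the \emph{global} influence $\Inf_k^p(f)$. The subtlety is that $M_t - M_{t-1}$ involves the derivative of a \emph{conditional} expectation of $f$, whereas $\Inf_k^p(f)$ is the probability that flipping $\omega_k$ flips the \emph{unconditioned} $f$; one needs a Jensen/convexity argument (or the Fourier-analytic fact that conditioning can only shrink the relevant coefficients) to show $\EE_p\big[(\nabla_j \EE_p[f \mid \mathcal{F}_{t-1}])^2\big] \le \Inf_j^p(f)$ on the event $\{J_t = j\}$, using that the remaining unqueried bits are independent of the history. Making this comparison rigorous — in particular checking that the history $\mathcal{F}_{t-1}$ is independent of $\omega_j$ and that averaging over it does not increase the squared derivative beyond the influence — is the crux, after which the bound $\var_p(f) \le p(1-p)\sum_{t} \EE_p\big[\charf{\mathcal{A}\text{ queries }J_t}\big] \le p(1-p)\sum_{k}\delta_p(\mathcal{A},k)\Inf_k^p(f)$ assembles immediately. (Alternatively, one can import the OSSS inequality verbatim from \cite{osss}, but I would prefer the self-contained martingale argument sketched here.)
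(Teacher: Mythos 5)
The paper does not prove Proposition~\ref{prop:osss}; it simply cites O'Donnell, Saks, Schramm and Servedio~\cite{osss}. So what you are offering is a self-contained replacement argument, and it is worth scrutinizing on its own terms.

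The Doob-martingale set-up is correct, as is the increment computation: conditionally on $\mathcal{F}_{t-1}$ with $J_t=j$, one indeed gets $\EE[(M_t-M_{t-1})^2\mid\mathcal{F}_{t-1}]=p(1-p)\bigl(\EE[\nabla_j f\mid\mathcal{F}_{t-1}]\bigr)^2$. The problem is the step you yourself flag as the crux. Applying Jensen to $\bigl(\EE[\nabla_j f\mid\mathcal{F}_{t-1}]\bigr)^2\le\EE[(\nabla_j f)^2\mid\mathcal{F}_{t-1}]$ and summing over $t$ gives
\begin{equation*}
\var_p(f)\le p(1-p)\sum_{k\in[n]}\PP_p\bigl[\{\mathcal{A}\text{ queries }k\}\cap\{k\text{ is pivotal}\}\bigr],
\end{equation*}
which is the \emph{joint} probability, not the product $\delta_p(\mathcal{A},k)\Inf_k^p(f)$. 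The two are not comparable in general: the event ``$\mathcal{A}$ queries $k$'' is determined by the already-revealed bits $\omega_{J_1},\ldots,\omega_{J_{t-1}}$ and the algorithm's internal coins, and the derivative $\nabla_k f$ depends on precisely those bits as well, so they can be positively correlated. Your appeal to the fact that ``$\mathcal{F}_{t-1}$ is independent of $\omega_j$'' is true but attacks the wrong dependence: $\omega_j$ does not enter $\nabla_j f$ at all, so its independence of the history is irrelevant, whereas the shared dependence on the revealed coordinates is exactly what blocks the factorization. Moreover, your concluding display $\var_p(f)\le p(1-p)\sum_t\EE[\charf{\mathcal{A}\text{ queries }J_t}]\le p(1-p)\sum_k\delta_p(\mathcal{A},k)\Inf_k^p(f)$ is internally inconsistent: the middle expression equals $p(1-p)\sum_k\delta_p(\mathcal{A},k)$, and since $\Inf_k^p(f)\le1$ the second inequality runs in the wrong direction.

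The route that actually produces the product form is the one in~\cite{osss} (see also Garban--Steif): couple two \emph{independent} samples $\omega,\omega'$, run the algorithm on $\omega$, and telescope $\var_p(f)=\cov\bigl(f(\omega),f(\omega^{(\tau)})\bigr)-\cov\bigl(f(\omega),f(\omega^{(0)})\bigr)$ along the hybrid configurations $\omega^{(t)}$ that take $\omega$'s values on the first $t$ queried bits and $\omega'$'s values elsewhere. The fresh copy $\omega'$ is what decouples the pivotality event from the querying event and yields the factor $\delta_p(\mathcal{A},k)\Inf_k^p(f)$; a straight Doob martingale on $\omega$ alone does not supply that independence. I would either import the result verbatim from~\cite{osss}, as the paper does, or reproduce the two-sample coupling.
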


The above inequality implies, in particular, that
\begin{equation*}
\var_{p}(f) \leq \frac{1}{4}\delta_{p}(\mathcal{A},[n])\sum_{k=1}^{n}\Inf_{k}^{p}(f),
\end{equation*}
and hence, together with the Margulis-Russo formula, one concludes that monotone Boolean functions satisfy the inequality
\begin{equation*}
\frac{d}{dp}\PP_{p}[f=1] \geq 4\frac{\var_{p}(f)}{\delta_{p}(\mathcal{A},[n])}.
\end{equation*}
As we shall see, we will, via the study of algorithms, be able to obtain polynomial bounds on the width of the threshold window of certain Boolean functions, where methods based on influences would give logarithmic bounds, see e.g.~\cite{fk}.

\par Although we shall not make use of this below, we mention the following upper bound on the sum of influences in terms of the revealment, due to O'Donnell and Servedio~\cite{odoser07}: For every function $f:\{0,1\}^n\to\{0,1\}$ that is monotone in each coordinate we have that
\begin{equation}\label{eq:influence_upper_bound}
\sum_{k\in[n]}\Inf_k^p(f)\,\le\,\sqrt{n\sum_{k\in[n]}\Inf_k^{p}(f)^2}\,\le\,\frac{1}{p(1-p)}\sqrt{n\,\delta_p(\mathcal{A},[n])}.
\end{equation}
The former of the two inequalities is immediate from Cauchy-Schwarz' inequality, whereas the latter follows from (a variant of) the Schramm-Steif revealment theorem. (See also~\cite[Theorem~VIII.8]{book_ns} for a direct proof.)
This inequality provides a way to obtain a lower bound, as opposed to the upper bound obtained via the OSSS inequality, on the width of the threshold window for monotone Boolean functions that is sharper than the elementary lower bound of order $\sfrac{1}{\sqrt{n}}$. We are not aware of any such application having previously appeared in the literature. However, see~\cite[Section~VIII.5]{book_ns} for an application showing that the critical four-arm exponent for Bernoulli percolation on $\ZZ^2$ is strictly larger than one.

\section{Continuum to discrete}\label{sec:cont_to_disc}

\par We now begin to set the stage for the proofs of Theorems~\ref{teo:ns} and~\ref{teo:tw}. Our approach will be based on a method developed in~\cite{abgm}, and revisited in~\cite{att}, that allows one to reduce the continuum problem at hand to its discrete counterpart via a two-stage construction of the continuum process.

\par Fix an integer $k \geq 2$ and choose $\eta_{k}\in\Omega$ distributed as $\PP_{kn, p}$. Let $\eta$ be obtained from $\eta_{k}$ by independently including each point of $\eta_{k}$ with probability $\sfrac{1}{k}$. Notice that $\eta$ is distributed according to $\PP_{n,p}$, and that conditional on $\eta_{k}$, we may consider $\eta$ as an element in $\{0,1\}^{\eta_{k}}$ chosen according to $\PP_{\sfrac{1}{k}}$.

\par Recall the notation $(\eta,\eta(\epsilon))$ for a pair of configurations in $\Omega$ distributed according to $\PP_{n,p}$, where the latter is an $\epsilon$-perturbation of the former. The two-stage construction gives an alternative way to obtain a pair of configurations $(\eta,\eta^\epsilon)$ where, conditional on $\eta_k$, the latter is obtained by an $\epsilon$-perturbation of the former seen as elements in $\{0,1\}^{\eta_k}$.
Using the fact that $\eta$ and $\eta_k\setminus\eta$ are independent, it is for $\epsilon'\le1-\sfrac{1}{k}$ and $\epsilon=\epsilon'/(1-\sfrac{1}{k})$ straightforward to verify that $(\eta,\eta(\epsilon'))$ and $(\eta,\eta^\epsilon)$ are equal in distribution.

\par The two-stage construction thus leads us to the identity
\begin{equation}\label{eq:discretization}
\begin{split}
&\EE_{n, \sfrac{1}{2}}\big[ f_{R}(\eta)f_{R}(\eta(\epsilon'))\big]- \EE_{n,\sfrac{1}{2}}\big[f_{R}(\eta)\big]^{2}\, =\,\var_{kn,\sfrac{1}{2}}\big(\EE\left[f_{R}(\eta)|\eta_{k}\right]\big)\\
&\quad\quad\quad\quad+\EE_{kn, \sfrac{1}{2}}\big[\EE[f_{R}(\eta)f_{R}(\eta^{\epsilon})| \eta_{k}]-\EE[f_{R}(\eta)|\eta_{k}]^{2}\big].
\end{split}
\end{equation}
In order to prove that $f_R$ is noise sensitive it will thus suffice to prove that each term in the right-hand side of~\eqref{eq:discretization} is small for large $n$. To prove that the variance term, for fixed $k$, tends to zero as $n$ tends to infinity turns out to be equivalent to the original problem. To see this, let $\eta'$ and $\eta''$ be obtained independently from $\eta_k$ by keeping each point with probability $\sfrac1k$. Then, for $\epsilon'=1-\sfrac1k$ the joint law of $(\eta',\eta'')$ equals that of $(\eta,\eta(\epsilon'))$, and hence\footnote{Here, $k>1$ does not have to be an integer.}
\begin{equation}\label{eq:thinning}
\begin{aligned}
&\EE_{n, \sfrac{1}{2}}\big[ f_{R}(\eta)f_{R}(\eta(\epsilon'))\big]- \EE_{n,\sfrac{1}{2}}\big[f_{R}(\eta)\big]^{2}\\
&\qquad=\,\EE_{kn,\sfrac12}\big[\EE[f_R(\eta')f_R(\eta'')|\eta_k]\big]-\EE_{kn,\sfrac12}\big[\EE[f_R(\eta')|\eta_k]\big]^2\\
&\qquad=\,\var_{kn,\sfrac12}\big(\EE[f_R(\eta')|\eta_k]\big).
\end{aligned}
\end{equation}
However, we shall in Lemma~\ref{lemma:variance_decay} see that the expression in~\eqref{eq:thinning} tends to zero as $k\to\infty$. The goal will then be to show that, for large $k$, conditional on $\eta_k$, the function $f_R:\{0,1\}^{\eta_k}\to\{0,1\}$ is noise sensitive in the sense of~\eqref{eq:ns_def}, with high probability.

\par In a similar manner we shall rely on the two-stage construction in order to prove that $f_R$ has a sharp threshold at $p=\sfrac12$. The construction here will have to be slightly different, since we now want to vary the colour of certain points and not their presence. We will thus let $\overline\eta_k$ denote the projection of $\eta_k$ to $S$, and instead aim to show that $\PP[f_R(\eta)=1|\overline\eta_k]$ grows from 0 to 1 in a narrow interval around $p=\sfrac12$, with high probability.
A first step in both these instances is obtained in the following lemma, which has its origins in~\cite{abgm}, although the proof we present here is taken from~\cite{att}.

\begin{lemma}\label{lemma:variance_decay}
For every integer $k \geq 2$ and $p \in (0,1)$ we have
\begin{equation*}
\var_{kn, p}\big(\EE\left[f_{R}(\eta)|\eta_{k}\right]\big) \leq \frac{1}{k}.
\end{equation*}
\end{lemma}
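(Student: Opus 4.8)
The claim is that averaging $f_R$ over the thinning step contracts variance by a factor $1/k$, and the natural way to see this is to exploit the fact that, conditionally on $\eta_k$, the configuration $\eta$ is obtained by including each of the $|\eta_k|$ points independently with probability $1/k$. The plan is to write $g(\eta_k) := \EE[f_R(\eta)\mid \eta_k]$ and compute its variance by conditioning on an intermediate object. Concretely, I would introduce a two-step sampling: first reveal which points of $\eta_k$ survive into a larger intermediate configuration, and then thin further; but the cleanest route is the one already foreshadowed in~\eqref{eq:thinning}. Namely, let $\eta'$ and $\eta''$ be two conditionally independent $1/k$-thinnings of $\eta_k$. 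Then
\begin{equation*}
\var_{kn,p}\big(\EE[f_R(\eta)\mid\eta_k]\big) \,=\, \EE\big[f_R(\eta')f_R(\eta'')\big] - \EE\big[f_R(\eta')\big]^2,
\end{equation*}
and the right-hand side is exactly the covariance between $f_R$ evaluated on two thinnings that are coupled only through $\eta_k$.

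The key observation is that two independent $1/k$-thinnings of the same Poisson process of intensity $kn$ are \emph{almost} independent: with probability $(1-1/k)$ for each point, the two thinnings use disjoint randomness, and the probability that a given point lands in \emph{both} $\eta'$ and $\eta''$ is $1/k^2$, while the expected number of such ``shared'' points is $n/k$. More usefully, one can couple $(\eta',\eta'')$ with a genuinely independent pair $(\tilde\eta',\tilde\eta'')$ of $\PP_{n,p}$-configurations so that they differ only on the shared points; but rather than a coupling argument, I would use the following exact identity. Partition $\eta_k$ according to which of $\eta',\eta''$ each point falls into: this is a multinomial split with cell probabilities $1/k^2$ (both), $1/k - 1/k^2$ (only $\eta'$), $1/k-1/k^2$ (only $\eta''$), $(1-1/k)^2$ (neither). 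Conditioning on the ``both'' set $B$ and on its complement, $\eta'\setminus B$ and $\eta''\setminus B$ become conditionally independent given $(\eta_k, B)$. Hence
\begin{equation*}
\EE[f_R(\eta')f_R(\eta'')\mid\eta_k, B] \,=\, \EE[f_R(\eta')\mid \eta_k,B]\,\EE[f_R(\eta'')\mid\eta_k,B],
\end{equation*}
and by symmetry these two conditional expectations are equal, so the left side is a conditional second moment, hence a square. Taking expectations and using Jensen ($\EE[X^2]\ge(\EE X)^2$ applied in the right direction, plus the tower property) reduces the covariance to $\var$ of $\EE[f_R(\eta')\mid \eta_k, B]$ against the randomness of $B$ — but $B$ is a $1/k$-thinning of... this is going in circles, so I would instead bound things directly: condition on the event that $B=\emptyset$, which has probability $(1-1/k^2)^{|\eta_k|}$, no — that is not bounded below either.

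The correct and simplest route, which I expect is the one in~\cite{att}, is to observe that $\var_{kn,p}(\EE[f_R\mid\eta_k])$ equals the covariance of $f_R$ on two thinnings sharing, in expectation, only $n/k$ points, and to bound this covariance by the probability that resampling those shared points changes the outcome — \emph{but} one does not even need that. Instead: write $\eta_k$ as a disjoint union $\eta_k = \zeta_1 \sqcup \cdots \sqcup \zeta_k$ where the split is into $k$ classes each chosen independently and uniformly, so that $\eta$ (the $1/k$-thinning) has the same law as $\zeta_1$. Then $\EE[f_R(\eta)\mid\eta_k] = \frac1k\sum_{j=1}^k \EE[f_R(\zeta_j)\mid\eta_k]$ only if we further symmetrize — rather, define $g_j := f_R(\zeta_j)$; these are conditionally i.i.d.\ given $\eta_k$, and $\EE[f_R(\eta)\mid\eta_k] = \EE[g_1\mid\eta_k]$. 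Now the unconditional variance satisfies, by the law of total variance and the fact that $\var(g_1) = \var(\EE[g_1\mid\eta_k]) + \EE[\var(g_1\mid\eta_k)]$ with the conditional variance being nonnegative, combined with $\var\big(\frac1k\sum_j g_j\big) = \frac1k \var(g_1)$ (since the $g_j$ are i.i.d.\ once we note $\frac1k\sum g_j$ is a thinning-type average)... The cleanest formulation: $\EE[g_1\mid\eta_k] = \EE\big[\frac1k\sum_{j=1}^k g_j \,\big|\, \eta_k\big] = \frac1k\sum_{j=1}^k g_j$ is false because $g_j$ is not $\eta_k$-measurable; but $\frac1k\sum_j g_j$ \emph{is} the conditional expectation's natural estimator, and since the $g_j$ are i.i.d., $\var\big(\frac1k\sum_j g_j\big)=\frac1k\var(g_1)\le\frac1k$, while $\var(\EE[g_1\mid\eta_k])\le\var\big(\frac1k\sum_j g_j\big)$ because conditioning on $\eta_k$ is a coarser $\sigma$-algebra than the one generated by $(\zeta_1,\dots,\zeta_k)$ and $\EE[g_1\mid\eta_k]=\EE[\frac1k\sum_j g_j\mid\eta_k]$. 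That last equality holds by exchangeability, and then Jensen's inequality for conditional expectation gives $\var(\EE[\frac1k\sum_j g_j\mid\eta_k])\le\var(\frac1k\sum_j g_j)\le\frac1k$.

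The main obstacle is getting the conditioning structure exactly right: one must verify that the $k$-fold symmetric split of $\eta_k$ genuinely yields conditionally i.i.d.\ copies whose average has the claimed variance $\frac1k\var(g_1)$, and that $\EE[f_R(\eta)\mid\eta_k]$ coincides with $\EE[\frac1k\sum_j g_j \mid \eta_k]$ by exchangeability. Once that is in place, the bound $\var(g_1)\le 1$ (as $f_R\in\{0,1\}$) and Jensen finish the proof. So in summary: (i) realize $\eta$ as one class of a uniform $k$-coloring of the points of $\eta_k$; (ii) note the $k$ resulting configurations are conditionally i.i.d.\ given $\eta_k$ and each distributed as $\eta$; (iii) apply $\var\big(\frac1k\sum_{j=1}^k f_R(\zeta_j)\big)=\frac1k\var_{n,p}(f_R)\le\frac1k$; (iv) apply conditional Jensen to pass from this average to $\EE[f_R(\eta)\mid\eta_k]$.
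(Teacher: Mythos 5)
Your final plan, steps (i)--(iv), is essentially the paper's proof: the paper realizes $\eta_k$ as the union of $k$ i.i.d.\ copies $\eta^{(1)},\dots,\eta^{(k)}$ of $\eta$ and sets $\eta=\eta^{(\kappa)}$ with $\kappa$ uniform in $[k]$, which by the Poisson colouring theorem is the same construction as your uniform $k$-colouring of $\eta_k$. One claim in step (ii) is wrong as stated, though: the classes $\zeta_1,\dots,\zeta_k$ are \emph{not} conditionally i.i.d.\ given $\eta_k$ --- they partition $\eta_k$, so given $\eta_k$ they are deterministically linked (in fact negatively dependent). What is actually needed, and what your steps (iii) and (iv) tacitly and correctly invoke, are the two weaker facts that the $\zeta_j$ are \emph{unconditionally} i.i.d.\ with law $\PP_{n,p}$ (giving $\var\big(\frac1k\sum_{j}f_R(\zeta_j)\big)=\frac1k\var_{n,p}(f_R)\le\frac1k$) and that they are conditionally \emph{exchangeable} given $\eta_k$ (giving $\EE[f_R(\zeta_1)\mid\eta_k]=\EE\big[\frac1k\sum_j f_R(\zeta_j)\mid\eta_k\big]$, after which conditional Jensen finishes). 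With that one phrase corrected the argument is complete and matches the paper; the long exploratory middle of your writeup, with the multinomial-split and shared-point digressions, should simply be cut.
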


\begin{proof}
It all boils down to use a suitable construction for the pair $(\eta_{k},\eta)$. Consider $k$ independent copies $\eta^{(1)},\eta^{(2)},\ldots,\eta^{(k)}$ of $\eta$, and let $\kappa$ be chosen uniformly in $[k]$.
We then observe that
\begin{equation*}
\begin{aligned}
\var_{kn, p}\left(\EE\left[f_{R}(\eta)|\eta_{k}\right]\right) \,& \leq\, \var_{n,p}\big(\EE\left[f_{R}(\eta^{(\kappa)})|(\eta^{(i)})_{i=1}^{k}\right]\big)\\
& =\, \var_{n,p}\bigg(\frac{1}{k}\sum_{i=1}^{k}f_{R}(\eta^{(i)})\bigg).
\end{aligned}
\end{equation*}
The lemma then follows from the independence of the $\eta^{(i)}$.
\end{proof}

\par As an easy corollary of the lemma above we obtain the following.

\begin{lemma}\label{lemma:horizontal_crossing}
For every rectangle $R \subseteq S$ there exists $k_{0}$, depending only on the aspect ratio of $R$, such that if $k \geq k_{0}$, then we have, for all large $n$, that
\begin{equation*}
\PP_{kn,\sfrac{1}{2}} \Big[\PP\left[ f_R(\eta)=1| \eta_{k}\right]  \notin [\sfrac{\useconstant{c:non_trivial}}{2}, 1-\sfrac{\useconstant{c:non_trivial}}{2}] \Big] \leq \frac{1}{\sqrt{k}},
\end{equation*}
where $\useconstant{c:non_trivial}$ is the constant in~\eqref{eq:non_trivial}.
\end{lemma}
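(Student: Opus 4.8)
The plan is to deduce Lemma~\ref{lemma:horizontal_crossing} from Lemma~\ref{lemma:variance_decay} together with the box-crossing estimate~\eqref{eq:non_trivial}, via a second-moment (Chebyshev) argument. Write $X_k := \PP[f_R(\eta)=1\,|\,\eta_k]$, which is a random variable on the probability space of $\PP_{kn,\sfrac12}$ taking values in $[0,1]$. The key observations are that its mean is $\EE_{kn,\sfrac12}[X_k] = \PP_{n,\sfrac12}[f_R(\eta)=1]$, which by~\eqref{eq:non_trivial} lies in $[\useconstant{c:non_trivial},\,1-\useconstant{c:non_trivial}]$ uniformly in $n$, and that its variance is controlled by Lemma~\ref{lemma:variance_decay}: $\var_{kn,\sfrac12}(X_k)\le\sfrac1k$.

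The main step is then just Chebyshev's inequality. If $X_k\notin[\sfrac{\useconstant{c:non_trivial}}{2},\,1-\sfrac{\useconstant{c:non_trivial}}{2}]$, then since $\EE[X_k]\in[\useconstant{c:non_trivial},\,1-\useconstant{c:non_trivial}]$ we have $|X_k-\EE[X_k]|\ge\sfrac{\useconstant{c:non_trivial}}{2}$ (the distance from a point outside the larger interval to any point of the smaller interval is at least $\sfrac{\useconstant{c:non_trivial}}{2}$). Hence
\begin{equation*}
\PP_{kn,\sfrac12}\Big[X_k\notin[\sfrac{\useconstant{c:non_trivial}}{2},\,1-\sfrac{\useconstant{c:non_trivial}}{2}]\Big]\,\le\,\PP_{kn,\sfrac12}\Big[|X_k-\EE[X_k]|\ge\sfrac{\useconstant{c:non_trivial}}{2}\Big]\,\le\,\frac{4\var_{kn,\sfrac12}(X_k)}{\useconstant{c:non_trivial}^2}\,\le\,\frac{4}{\useconstant{c:non_trivial}^2\,k}.
\end{equation*}
One should be a little careful that~\eqref{eq:non_trivial} is only guaranteed to hold ``uniformly in $n$'' for $n$ large (or for all $n$, depending on the precise statement); in any case choosing $n$ large enough makes $\EE_{kn,\sfrac12}[X_k]\in[\useconstant{c:non_trivial},\,1-\useconstant{c:non_trivial}]$, which is where the ``for all large $n$'' in the statement comes from.

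Finally, to obtain the clean bound $\sfrac{1}{\sqrt k}$ claimed in the lemma, choose $k_0$ depending only on $\useconstant{c:non_trivial}$ (hence only on the aspect ratio of $R$, by~\eqref{eq:non_trivial}) large enough that $\frac{4}{\useconstant{c:non_trivial}^2\,k}\le\frac{1}{\sqrt k}$ for all $k\ge k_0$, i.e.\ $k_0 = \lceil 16/\useconstant{c:non_trivial}^4\rceil$ works. I do not anticipate a genuine obstacle here: the lemma is a routine corollary, and the only points requiring minor care are the bookkeeping of the ``for all large $n$'' quantifier (needed because~\eqref{eq:non_trivial} is itself only asymptotic in $n$) and noting that the dependence of $k_0$ on $R$ enters solely through the aspect-ratio-dependent constant $\useconstant{c:non_trivial}$.
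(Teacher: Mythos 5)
Your proof is correct and follows essentially the same route as the paper: identify $\EE_{kn,\sfrac12}[X_k]=\PP_{n,\sfrac12}[f_R=1]\in[\useconstant{c:non_trivial},1-\useconstant{c:non_trivial}]$ (for $n$ large), apply Chebyshev with Lemma~\ref{lemma:variance_decay} to get the bound $\frac{4}{\useconstant{c:non_trivial}^2 k}$, and choose $k_0$ so this is at most $\frac{1}{\sqrt{k}}$. The paper's proof is identical in substance; your write-up merely spells out the bookkeeping of the quantifiers a bit more explicitly.
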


\begin{proof}
Chebyshev's inequality and Lemma~\ref{lemma:variance_decay} imply that
\begin{equation*}
\begin{split}
\PP_{kn, \sfrac{1}{2}} & \Big[\PP\left[f_R(\eta)=1| \eta_{k}\right]  \notin [\sfrac{\useconstant{c:non_trivial}}{2}, 1-\sfrac{\useconstant{c:non_trivial}}{2}] \Big]\\
& \qquad\leq 
\PP \left[\Big|\PP\left[f_R(\eta)=1| \eta_{k}\right]-\PP_{n, \sfrac{1}{2}}[f_R=1]\Big| \geq \frac{\useconstant{c:non_trivial}}{2}\right] \\
& \qquad \leq \frac{4}{\useconstant{c:non_trivial}^{2} k} \leq \frac{1}{\sqrt{k}},
\end{split}
\end{equation*}
for $k$ and $n$ large enough.
\end{proof}

\begin{remark}\label{remark:crossing_fixed_k}
Notice that if, for some $k \geq 2$, we have
\begin{equation*}
\lim_{n \to \infty}\var_{kn, \sfrac{1}{2}}\big(\EE[f_R(\eta)\,|\,\eta_{k}]\big)=0,
\end{equation*}
then the conclusion in Lemma~\ref{lemma:horizontal_crossing} strengthens to
\begin{equation*}
\lim_{n \to \infty}\PP_{kn, \sfrac{1}{2}}\big[\PP\left[\left. f_R(\eta)=1\right| \eta_{k}\right]  \notin [\sfrac{\useconstant{c:non_trivial}}{2}, 1-\sfrac{\useconstant{c:non_trivial}}{2}] \big] =0.
\end{equation*}
\end{remark}

\section{An algorithm with low revealment}\label{sec:algorithms}

\par In this section, we continue to work towards the proofs of Theorems~\ref{teo:ns} and~\ref{teo:tw}. We will adopt the two-stage construction introduced in the previous section, and devise an algorithm which, conditional on the denser set of points $\eta_k$, determines the outcome of $f_R(\eta)$ by querying points of $\eta_k$ whether they are contained in the sparser set $\eta$. We then proceed to show that this algorithm has low revealment, which in the next section will allow us to deduce that $f_R$ is noise sensitive and has a threshold at $p=\sfrac12$.

\subsection{The algorithm}

\par In this subsection we describe the algorithm. Loosely speaking, it will explore the square $S$ until it has discovered all blue components that touch a randomly selected vertical line through $R$.
This is achieved by querying points close to the vertical line first, and then proceeding to points that are close to already explored blue components connected to the vertical line. Since we cannot tell the Voronoi tessellation of $\eta$ by just observing $\eta_k$, we will only gain information about the actual tiling locally as we go.
To contour this difficulty, we will split $S$ into boxes on a mesoscopic scale (see Figure~\ref{fig:partition}), so that by querying all points within such a box we will correctly determine the tiling within that box with high probability, apart from close to the boundary. That is, by further dividing each box into nine sub-boxes the thus learn the tiling of $\eta$ correctly within the centre box with high probability.

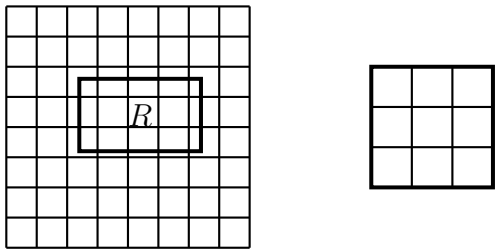
\begin{figure}[!ht]
\centering
\begin{tikzpicture}[scale=0.8]

\foreach \k in {0,0.5,1,1.5,2,2.5,3,3.5,4}{
	\draw[thick] (\k,0)--(\k,4);
	\draw[thick] (0, \k)--(4,\k);
	}
	
\draw[line width=1.5](1.2,1.6)rectangle(3.2,2.8);
\node at (2.2,2.2){$R$};

\draw[line width=1.5](6,1)rectangle(8,3);
\foreach  \k in {1/3,2/3}{
	\draw[thick](6+2*\k,1)--(6+2*\k,3);
	\draw[thick](6, 1+2*\k)--(8,1+2*\k);
	}

\end{tikzpicture}
\caption{The unit square divided into smaller squares at a mesoscopic scale. When all points of $\eta_k$ in a sub-square are queried, then the tiling within the center box in a further division into nine sub-boxes is correctly determined with high probability.}
\label{fig:partition}
\end{figure}

\par If the algorithm discovers a blue component that touches both left and right sides of $R$, then there is a horizontal blue crossing of $R$. If not, then there is a vertical red crossing. The reason the algorithm has low revealment is that a given point is both unlikely to be close to the randomly located vertical line, and unlikely to be connected far by a blue path.

\par The rest of this section will be dedicated to confirming these claims. First we give a more precise description of our algorithm, see Algorithm~\ref{alg:crossing}. Recall that $\Omega$ is the collection of finite subsets of $S \times \{0,1\}$.

\begin{algorithm}[!h]\caption{(Existence of a horizontal blue crossing)}\label{alg:crossing}
\begin{algorithmic}[1]
\State \textbf{Input:} $\eta_{k} \in \Omega$, $\eta \in \{0,1\}^{\eta_{k}}$ and $R=[a,b] \times [c,d] \subseteq S$.
\State Choose a point $x_{0}$ uniformly in the mid third of the interval $[a, b]$.
\State Consider a lattice in $S$ with mesh size $m= 1/\lceil n^{\sfrac{1}{4}} \rceil$, and divide each cell in this lattice into nine equally sized subcells.
\State Query points in all cells of the lattice that intersect $R \cap \{x=x_0\}$ and their neighbouring cells. Declare the examined cells \emph{explored}, and each explored cell \emph{safe} if also the eight cells that surround it are explored.
\State If any of the cells explored so far contains an empty subcell, then query all points of $\eta_{k}$. Otherwise, proceed and explore all cells that share an edge with a safe cell and are connected to the line $\{x=x_0\}$ by a blue component inside the safe region. Explore also any cell neighbouring to these cells and declare an explored cell which is surrounded by explored cells safe.
\State Repeat Step 5 until all connected blue components inside $R$ that intersect $\{x=x_0\}$ are discovered. If there is a connected blue component inside $R$ that connects $\{x=a\}$ to $\{x=b\}$, return 1. Otherwise, return 0.
\end{algorithmic}
\end{algorithm}

\begin{lemma}
Algorithm~\ref{alg:crossing} determines the outcome of $f_{R}$ almost surely.
\end{lemma}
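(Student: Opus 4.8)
The plan is to argue that Algorithm~\ref{alg:crossing} always terminates and that, upon termination, its output coincides with $f_R(\eta)$. The starting point is the elementary topological dichotomy for the rectangle $R=[a,b]\times[c,d]$: either there is a continuous blue path crossing $R$ horizontally, or there is a continuous red path crossing $R$ vertically, and these two possibilities are mutually exclusive. (This is the standard planar duality argument; the footnote in the introduction already addresses why the boundary overlaps between cells can be ignored.) Consequently, it suffices to show that the set of cells the algorithm explores encodes enough of the Voronoi tiling of $\eta$ to decide which of the two alternatives holds.

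First I would treat the degenerate branch in Step~5: if any explored cell contains an empty subcell, the algorithm queries \emph{all} points of $\eta_k$, hence learns $\eta$ completely, hence can evaluate $f_R(\eta)$ directly; there is nothing to prove in that case. So assume this never happens. The key deterministic fact to record is the one flagged in the text preceding the algorithm: if all points of $\eta_k$ lying in a mesh cell $Q$ (equivalently, in all nine of its subcells) have been queried, and none of those nine subcells is empty, then the Voronoi tiling of $\eta$ restricted to the centre subcell of $Q$ is correctly determined by the queried information — because the nearest point of $\eta$ to any location in the centre subcell must lie within $Q$ itself, as every subcell of $Q$ contains at least one point of $\eta_k$ and hence (once queried) its membership in $\eta$ is known, while the diameter of a subcell is at most the distance from the centre subcell to the complement of $Q$. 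Thus within every \emph{safe} cell (one all of whose eight neighbours are also explored) the colouring induced by $\eta$ is known exactly.

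Next I would verify the invariant maintained by the loop in Steps~5--6: after each iteration, the explored region contains a full neighbourhood (in the mesh) of every blue component of $\eta\cap R$ that meets the line $\{x=x_0\}$ and has so far been reached. Step~4 establishes the base case by exploring all cells meeting $R\cap\{x=x_0\}$ together with their neighbours, so the portion of any such blue component lying in those cells is detected and its boundary is surrounded by explored (hence safe in the interior) cells. Step~5 then propagates: any cell sharing an edge with a safe cell and joined to $\{x=x_0\}$ by a blue path inside the safe region is explored, along with its neighbours. Since a blue component is connected and $R$ is bounded, and since the mesh is finite, a straightforward induction on the number of mesh cells shows the loop discovers, in finitely many steps, the entire collection of blue components of $\eta$ inside $R$ that touch $\{x=x_0\}$, together with a safe collar around each; the loop then halts (Step~6). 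At that moment the algorithm knows, exactly, whether one of these components connects $\{x=a\}$ to $\{x=b\}$.

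Finally I would close the argument by the dichotomy. If some discovered blue component joins the two vertical sides of $R$, the algorithm correctly returns $1$, and indeed $f_R(\eta)=1$. If not, I claim $f_R(\eta)=0$: any horizontal blue crossing of $R$ would necessarily intersect the vertical segment $R\cap\{x=x_0\}$ (as $x_0\in(a,b)$), hence would be one of the components the algorithm has fully explored, contradicting the assumption that none of them spans $R$ horizontally. So the algorithm returns $0$ correctly. This exhausts all cases, so the output equals $f_R(\eta)$ almost surely — "almost surely" entering only through the null event that the Voronoi configuration is degenerate (a cell fails to be closed, bounded and convex, or a point lies equidistant from more than three centres), which is excluded with probability one by the standard genericity properties of the Poisson process recorded in the footnote. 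The main obstacle, and the point deserving the most care, is the deterministic tiling-reconstruction claim — precisely pinning down that querying a cell plus its neighbourhood determines the tiling on the centre subcell — since everything else is bookkeeping on a finite mesh.
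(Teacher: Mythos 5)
The overall architecture of your argument matches the paper's: reduce the correctness of the output to correctly identifying the blue components that meet $\{x=x_0\}$, handle the degenerate branch (all of $\eta_k$ queried) trivially, maintain a loop invariant about the explored region, and finish with the observation that any horizontal blue crossing must meet that line. You yourself flag the deterministic tiling-reconstruction step as the place deserving the most care, and that is precisely where the argument goes wrong.

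You assert that if all points of $\eta_k$ in a single mesh cell $Q$ (side $m$) are queried and none of its nine subcells is empty, then the Voronoi tiling of $\eta$ on the centre subcell $Q_0$ of $Q$ is determined, on the grounds that ``the diameter of a subcell is at most the distance from the centre subcell to the complement of $Q$.'' This inequality is false: a subcell has side $m/3$, so its diameter is $\sqrt{2}\,m/3$, whereas the distance from $Q_0=[m/3,2m/3]^2$ to $\partial Q$ is exactly $m/3$, and $\sqrt 2\,m/3 > m/3$. Concretely, take $y=(m/3,m/3)$, put the only $\eta$-point of each subcell at the corner farthest from $y$ (so the nearest $\eta$-point in $Q$ is at distance $\sqrt 2\,m/3$), and place an $\eta$-point just outside $Q$ at $(-\varepsilon,m/3)$; this outside point is at distance $m/3+\varepsilon<\sqrt 2\,m/3$ from $y$, so the Voronoi cell of $y$ is \emph{not} determined by the data in $Q$ alone. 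Hence knowing $\eta$ on a single cell does not pin down the tiling on its centre subcell, and the claimed ``key deterministic fact'' does not hold.

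What the paper's proof actually uses is the analogous statement for a $3\times3$ block of cells, not a single cell: if a cell $C$ is \emph{safe} (it and its eight neighbours are all explored) and none of these nine cells has an empty subcell, then for any $y\in C$ the nearest $\eta$-point lies within distance $\sqrt 2\,m/3$ of $y$ (in the subcell of $C$ containing $y$), while any $\eta$-point outside the $3m\times 3m$ block centred on $C$ is at distance at least $m>\sqrt 2\,m/3$ from $y$. So the tiling on all of $C$ is determined by the queried data in the block. This is the statement you need; once it replaces your single-cell claim, the rest of your bookkeeping (the induction on the mesh and the final dichotomy) is correct and closely parallels the paper's proof.
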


\begin{proof}
Observe that if there exists a horizontal blue crossing of $R$, then it necessarily crosses every vertical line through $R$. Hence, it suffices to verify that given $\eta_k$ the algorithm correctly determines all connected blue components of $\eta$ inside $R$ that intersect the random vertical line $\{x=x_0\}$.

If the algorithm queries all points of $\eta_k$ then this is trivially true. If not, then all we need to verify is that for each safe cell, i.e., a cell which is explored along with its eight surrounding neighbours, we have determined the tiling within. This is indeed the case since if no neighbouring cell has an empty subcell, then no point outside the safe cell and its eight neighbours can affect the tiling inside the safe cell.
\end{proof}

\par Now that we have an algorithm that determines $f_{R}$, we need to bound its revealment. Since the algorithm only reveals the configuration inside cells of a mesoscopic lattice, we consider each such cell individually and bound the revealment of every point inside it at once. This is done in the next two subsections.

\subsection{One-arm estimates}

\par There are three possibilities for a point in $\eta_k$ to be queried by the algorithm above. First of all, there is the case that some subcell of a cell does not contain any point of $\eta$. In this case, all the points of $\eta_{k}$ are queried. The second case is when the point is contained in a cell `close' to the random vertical line through $R$. Finally, there is the possibility that the given points is in a cell located `far' from the line, but there exists a connected blue path in $\eta$ connecting the vertical line with one of the eight cells that surround that cell. In this subsection we shall bound the probability of the third of these possibilities.

\par Let $m=\lceil n^{\sfrac{1}{4}} \rceil^{-1}$ as before, and partition $S$ into squares of side length $m$. The precise choice of $m$ is irrelevant as long as $n^{-\sfrac{1}{2}} \ll m \ll 1$. Let $C\subseteq S$ be a cell in this lattice, and let $C'$ be the square of side length $3m$ centered at $C$. We define $\Arm(C)$ as the event that there exists a blue path that connects $C'$ to the boundary of the square of side $\sqrt{m}$ centered at $C$.

\begin{proposition}\label{prop:one_arm}
There exists $\delta>0$ such that, for every $\gamma>0$, we can find $k_{0}\ge1$ so that, for $k \geq k_{0}$, $p\le\sfrac12$, and all large $n$, depending on $k$, we have
\begin{equation*}
\PP_{kn,p}\Big[\PP\left[ \eta \in \Arm(C)\,|\,{\eta}_{k}\right] > n^{-\delta} \Big]< n^{-\gamma}.
\end{equation*}
\end{proposition}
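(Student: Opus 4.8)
The plan is to reduce the statement to a one-arm estimate for the genuine Voronoi model and then transfer it through the two-stage construction. First I would recall that, conditional on $\eta_k$, the configuration $\eta$ is a $\sfrac1k$-thinning of $\eta_k$, so $\PP[\eta\in\Arm(C)\,|\,\eta_k]$ is a bounded random variable whose expectation over $\eta_k$ is exactly $\PP_{n,p}[\eta\in\Arm(C)]$; the latter is the probability that in honest Poisson Voronoi percolation with intensity $n$ and blue density $p\le\sfrac12$ there is a blue path from the $3m$-box $C'$ to the boundary of the $\sqrt m$-box around $C$. The key input here is the standard polynomial one-arm (or, equivalently, RSW-type) estimate for critical and subcritical Voronoi percolation on $\RR^2$, which follows from the box-crossing property~\eqref{eq:non_trivial} together with an FKG/square-root-trick argument over the $\Theta(\log(\sqrt m/m))=\Theta(\log n)$ dyadic annuli between scale $m$ and scale $\sqrt m$. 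Each annulus is crossed by a blue circuit blocking the arm with probability bounded away from zero uniformly in $n$, and since $\sqrt m/m = m^{-1/2}\asymp n^{1/8}$ grows polynomially in $n$, one gets
\[
\PP_{n,p}[\eta\in\Arm(C)] \le n^{-2\delta}
\]
for some absolute $\delta>0$ and all large $n$. (For $p<\sfrac12$ the probability is even smaller, so $p\le\sfrac12$ suffices; monotonicity in $p$ handles the range uniformly.)

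Next I would run a first-moment / Markov argument at the level of $\eta_k$. Write $X:=\PP[\eta\in\Arm(C)\,|\,\eta_k]\in[0,1]$, so $\EE_{kn,p}[X]=\PP_{n,p}[\eta\in\Arm(C)]\le n^{-2\delta}$ by the previous paragraph — here I need that the expectation of the conditional arm-probability under $\PP_{kn,p}$ equals the unconditional arm-probability under $\PP_{n,p}$, which is exactly the content of the two-stage construction in Section~\ref{sec:cont_to_disc}. Markov's inequality then gives
\[
\PP_{kn,p}\big[X>n^{-\delta}\big]\le n^{\delta}\,\EE_{kn,p}[X]\le n^{\delta}\cdot n^{-2\delta}=n^{-\delta}.
\]
To beat an arbitrary prescribed $\gamma$ rather than just $\delta$, I would instead bound a higher conditional moment: for the $r$-th power, $\EE_{kn,p}[X^r]\le\EE_{kn,p}[X] = \PP_{n,p}[\eta\in\Arm(C)]$ since $X\le1$, which still only gives $n^{-\delta}$. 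The cleaner route is to note that $\Arm(C)$ is defined through crossings at scale $\sqrt m$, and one can instead ask for the stronger event of, say, $\lceil \gamma/\delta\rceil$ disjoint blue arms (or simply iterate the annulus argument up to scale $m^{1/2}$ with a larger multiplicative constant in the exponent): replacing $\sqrt m$-scale by the same scale but extracting a better power shows $\PP_{n,p}[\eta\in\Arm(C)]\le n^{-(\gamma+\delta)}$ for any fixed $\gamma$, at the cost of choosing $k_0=k_0(\gamma)$ and $n$ large. Markov's inequality then yields $\PP_{kn,p}[X>n^{-\delta}]\le n^{\delta-(\gamma+\delta)}=n^{-\gamma}$, as required. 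The role of $k\ge k_0$ is simply to guarantee, via Lemma~\ref{lemma:variance_decay} and the sprinkling, that the thinned configuration genuinely looks like $\PP_{n,p}$ on the relevant mesoscopic window; any dependence on $k$ is absorbed into "all large $n$, depending on $k$".

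The main obstacle is the one-arm estimate for Voronoi percolation itself: unlike Bernoulli percolation on $\ZZ^2$, here one does not have a priori independence between well-separated annuli, because far-away Poisson points can in principle influence the tiling near $C$. This is precisely why the mesoscopic discretization into safe cells was introduced — conditioning on $\eta_k$ localizes the tiling — but for the arm estimate one must still run RSW/FKG for the continuum model. I would handle this by invoking the box-crossing property~\eqref{eq:non_trivial} (valid uniformly in $n$ and on subsets of $\RR^2$ with boundary, per Tassion~\cite{tassion} and~\cite{qvp}) together with the FKG inequality for Voronoi percolation and the standard gluing construction of blocking circuits in dyadic annuli; the fact that there are $\Theta(\log n)$ independent-enough annuli (after a decoupling or a direct FKG argument on monotone circuit events, which requires no independence at all) turns the uniform positive bound per annulus into the polynomial decay $n^{-(\gamma+\delta)}$. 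The only subtlety is checking the box-crossing estimates apply at the scale-$m$ window inside $S$ near its boundary, but this is exactly the extension to bounded domains proved in~\cite{qvp}.
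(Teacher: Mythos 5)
Your proof has a genuine gap at its core, in the step where you try to convert the one-arm decay into a bound of $n^{-\gamma}$ for \emph{arbitrary} $\gamma$.

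Your Markov-type argument is internally consistent only up to the point where you get $\PP_{kn,p}[X>n^{-\delta}]\le n^{\delta}\,\EE_{kn,p}[X]=n^{\delta}\,\PP_{n,p}[\Arm(C)]$. But $\PP_{n,p}[\Arm(C)]$ is a fixed quantity: it is the unconditional arm probability for Voronoi percolation between scales $m$ and $\sqrt m$, and it decays at a \emph{fixed} polynomial rate $n^{-\alpha}$ determined by the geometry of $\Arm(C)$ and the box-crossing constant. It does not depend on $\gamma$, and crucially it does not depend on $k$ either (the thinned process is exactly $\PP_{n,p}$). Your attempted repair — ``ask for $\lceil\gamma/\delta\rceil$ disjoint blue arms,'' or ``iterate the annulus argument with a larger multiplicative constant in the exponent'' — is not coherent: replacing $\Arm(C)$ by a multi-arm event changes the event you are trying to bound, and you cannot ``extract a better power'' for the probability of a fixed event whose decay exponent is what it is. A first-moment argument can therefore only ever give a bound of the form $n^{\delta-\alpha}$ for a fixed $\alpha$, and so cannot beat an arbitrary $\gamma$.

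Relatedly, you never actually use $k_0=k_0(\gamma)$, and you explicitly wave it away (``any dependence on $k$ is absorbed into `all large $n$, depending on $k$' ''). But the dependence $k_0(\gamma)$ is the whole point of the statement, and it is where the paper's proof does the real work. The paper does not go through the unconditional arm probability at all. Instead it uses the conditional structure: on the complement of the large-cell event $E$ from~\eqref{eq:Hdef}, the indicators $g_j$ of the blocking events $O_j$ in the $\Theta(\log n)$ dyadic annuli between scale $m$ and $\sqrt m$ depend on disjoint regions $A_j'$, so they are \emph{conditionally independent given $\eta_k$}. By Lemma~\ref{lemma:horizontal_crossing} each individual annulus fails to block (i.e.\ $\PP[O_j\mid\eta_k]$ is small) with $\eta_k$-probability at most $O(k^{-1/2})$, and this bound is uniform in $n$. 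Over $\Omega(\log n)$ annuli, the probability that at least half fail is then at most $(Ck^{-1/2})^{\Omega(\log n)}$, which for $k$ large enough (depending on $\gamma$) is $<n^{-\gamma}$. On the complementary good event, the conditional independence of the $g_j(\eta^{(j)})$ gives $\PP[\Arm(C)\mid\eta_k]\le\prod_{j}\PP[O_j^c\mid\eta_k]\le n^{-\delta}$ for a fixed $\delta$. So $\delta$ is tied to the per-annulus blocking probability and the number of annuli, while $\gamma$ is beaten by making the $k^{-1/2}$-factor per annulus as small as one likes via $k_0(\gamma)$. This separation between the roles of $\delta$ and $\gamma$ — with $\gamma$ controlled by $k$, not by the arm exponent — is what your proposal is missing.

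Two smaller issues: the conditional independence across annuli, which your appeal to ``FKG on monotone circuit events, which requires no independence at all'' is meant to replace, is actually needed in the final multiplicative bound and is delivered by the large-cell estimate (Lemma~\ref{lemma:large_cell}), not by FKG; and the RSW/box-crossing input you invoke is indeed used, but in the quenched form of Lemma~\ref{lemma:horizontal_crossing}, not in the annealed form $\PP_{n,p}[\cdot]$, precisely because the statement to be proved is a quenched one.
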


\par Estimates of this type have previously been obtained in~\cite{abgm,qvp,att}, and the proof presented here will be similar, although different in some details. It will suffice to consider the critical case $p=\sfrac12$ due to monotonicity.
As a first step, we prove a lemma that bounds the probability that a configuration contains a large cell. Let
\newconstant{c:large_cell}
\begin{equation}\label{eq:Hdef}
E:=\big\{\text{some cell of $\eta$ has radius larger than } n^{-\sfrac{1}{3}}\big\}.
\end{equation}

\begin{lemma}\label{lemma:large_cell}
There exists $\useconstant{c:large_cell}>0$ such that, for all $n\ge1$, we have
\begin{equation}\label{eq:large_cell}
\PP_{n, \sfrac{1}{2}}[E]\leq\exp({-\useconstant{c:large_cell}n^{\sfrac13}}).
\end{equation}
\end{lemma}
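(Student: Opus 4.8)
\textbf{Proof plan for Lemma~\ref{lemma:large_cell}.}

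The plan is to estimate the probability that some Voronoi cell of $\eta$ has radius larger than $n^{-1/3}$ by a union bound over a fine grid, reducing everything to the following geometric observation: if the Voronoi cell $V(x)$ of a point $(x,u)\in\eta$ contains a point $y$ with $\dist(y,x)\ge r$, then the open disk $D(y, \dist(y,x))$ of radius $\dist(y,x)\ge r$ centred at $y$ contains no point of $\eta$ (projected to $S$) other than $x$ itself; in particular, the half-disk of $D(y,r')$ for a slightly smaller radius $r'$ — or better, a disk of radius $r/2$ tangent to $y$ and not containing $x$ — is entirely empty of points of $\eta$. So a large cell forces a large empty region, and empty regions are exponentially unlikely for a Poisson process of intensity $n\lambda_S$.

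Concretely, first I would fix a grid of $S$ with mesh $\tfrac12 n^{-1/3}$ (so $O(n^{2/3})$ cells). If some cell of $\eta$ has radius larger than $n^{-1/3}$, then there is a point $(x,u)\in\eta$ and a point $y\in S$ with $\dist(y,x)>n^{-1/3}$ and $y$ closer to $x$ than to every other point of $\eta$. Let $g$ be the grid cell containing the midpoint of the segment $xy$; then a disk of radius, say, $c\,n^{-1/3}$ (for a small absolute constant $c$, chosen so the disk fits inside $S$ near the grid cell $g$ after intersecting with $S$, using that the midpoint lies at distance at least $\tfrac12 n^{-1/3}$ from both $x$ and $y$) lies in the region $D(y,\dist(y,x))\setminus\{x\}$ and hence contains no point of $\eta$ other than possibly $x$. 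Intersecting with $S$ costs at most a constant factor in area by convexity of disks and the fact that $S$ is a square; thus there is a region $A_g\subseteq S$ of Lebesgue measure at least $c'\,n^{-2/3}$, determined by the grid cell $g$ alone, which is free of points of $\eta$ (ignoring at most one point $x$, which only improves the bound). Since $\eta$ restricted to $S$ (forgetting colours) is a Poisson process of intensity $n$, the probability that $A_g$ contains at most one point of $\eta$ is at most $(1 + n|A_g|)e^{-n|A_g|} \le \exp(-\tfrac12 c' n^{1/3})$ for $n$ large. A union bound over the $O(n^{2/3})$ grid cells absorbs the polynomial factor and yields $\PP_{n,1/2}[E]\le \exp(-\useconstant{c:large_cell}\,n^{1/3})$ for a suitable $\useconstant{c:large_cell}>0$ and all $n\ge1$ (adjusting the constant to handle small $n$).

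The main obstacle — really the only non-routine point — is the geometric bookkeeping needed to guarantee that the empty disk genuinely fits inside $S$ (or that intersecting with $S$ only loses a constant factor of area) and that its location is pinned down by a grid cell whose index ranges over only $O(n^{2/3})$ values; once the empty-region statement is set up cleanly, the probabilistic estimate is the standard void probability for a Poisson process. Note the colours of $\eta$ play no role here, since a cell of the tessellation is large regardless of colour, so it suffices to work with the underlying point process of intensity $n$ on $S$; the value $p=1/2$ in the statement is immaterial and the same bound holds for every $p$.
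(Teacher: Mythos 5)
Your plan is correct and its core is the same as the paper's: a large Voronoi cell forces a Poisson void on scale $n^{-1/3}$, which costs $\exp(-cn^{1/3})$ per location, and a union bound over $O(n^{2/3})$ mesoscopic boxes absorbs the polynomial factor. Where you diverge is in \emph{how} the void is extracted. You argue forward: a large cell yields an empty disk of radius of order $n^{-1/3}$ (the standard empty-ball property of Voronoi tessellations), which you then have to locate via a grid cell, shrink to a region determined by that grid cell alone, and intersect with $S$ near the boundary --- all doable, but, as you yourself flag, it requires bookkeeping, and as written your mesh $\tfrac12 n^{-1/3}$ is a bit too coarse for the grid cell itself to sit inside the empty disk, so you would in fact need a finer grid or a further shrinking of the disk. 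The paper argues by contrapositive, which eliminates this bookkeeping entirely: if \emph{every} box of side length $(10\lceil n^{1/3}\rceil)^{-1}$ contains a point of $\eta$, then every $y\in S$ has a point of $\eta$ within the box diagonal $\sqrt{2}\,(10\lceil n^{1/3}\rceil)^{-1} < n^{-1/3}$, so no cell can have radius exceeding $n^{-1/3}$; hence $E$ forces an empty box, and one bounds $\PP[E]$ directly by the number of boxes times the void probability of a single box. No disk, no midpoint, no trimming, no boundary cases. Your version works once the constants are fixed, but it is worth noticing that the contrapositive turns your ``only non-routine point'' into a one-line observation.
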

\begin{proof}
We split the unit square $S$ into boxes of side length $(10\lceil n^{\sfrac{1}{3}} \rceil)^{-1}$. Notice that for $E$ to occur it is necessary for the intersection of $\eta$ with at least one of these about $100n^{2/3}$ boxes to be empty.  For each individual box this occurs with probability at most $\exp(-0.01 n\cdot n^{-\sfrac23})$.
Via the union bound we conclude that
\begin{equation*}
\PP_{n, \sfrac{1}{2}}[E]\,\le\, 100\,n^{2/3}\exp({-0.01 n^{\sfrac13}}),
\end{equation*}
as required.
\end{proof}

\begin{proof}[Proof of Proposition~\ref{prop:one_arm}.]
Fix a cell $C \subseteq S$ of side length $m=\lceil n^{\sfrac{1}{4}} \rceil^{-1}$.
For every integer $j \ge0$, denote by $A_{j}$ the square annulus centered around $C$, with inner side-length $4^{j}m$ and outer side-length $3\cdot 4^{j}m$. Let $O_j$ be the event that there is \emph{not} a blue path connecting the inner and outer boundary of $A_j$. That is, $O_j$ is the even that there is a red path in $A_j$ that disconnects any blue component touching $C$ from the exterior of $A_j$.
Observe that, in order for the event $\Arm(C)$ to occur, $O_j$ cannot occur for integers $j$ in the set
\begin{equation*}
J:=\big\{j \in \NN : m \leq 4^{j}m \leq m^{\sfrac{1}{2}}\big\}.
\end{equation*}

\begin{figure}
\centering
\begin{tikzpicture}[scale=0.7]

\draw[pattern=north west lines] (-3,-3) rectangle (3,3);
\draw[fill=white](-2,-2) rectangle (2,2);
\draw[pattern=north west lines](-1,-1) rectangle (1,1);
\draw[fill=white](-0.5,-0.5)rectangle(0.5,0.5);

\foreach \x in {0.25,0.5,1,2,3,4}
{
\draw[thick,](-\x,-\x)rectangle(\x,\x);
}

\node at (0,0){$C$};
\end{tikzpicture}
\caption{The square $C$, surrounded by a larger square with side length $m^{\sfrac{1}{2}}$. The dashed annuli represent the sets $A_{j}$. Notice that, if there is a blue path from $C$ to the boundary of the square, none of the annuli can contain a red circuit.}
\label{fig:one_arm}
\end{figure}
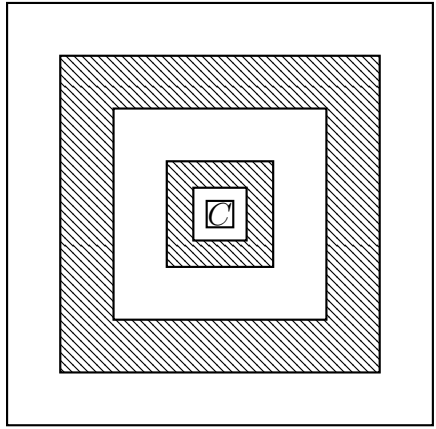

Let $E$ be the event in~\eqref{eq:Hdef}, and let $A_j'$ denote the set of points within distance $m/3$ of $A_j$. We note that, on $E^c$, the events $O_j$ are determined by the restriction of $\eta$ to $A_j'$, which we shall denote $\eta^{(j)}$. That is, if $g_j:\Omega\to\{0,1\}$ denotes the indicator of $O_j$, then
\begin{equation}\label{eq:g_j}
{\bf1}_{E^c}\cdot g_j(\eta)={\bf1}_{E^c}\cdot g_j(\eta^{(j)}).
\end{equation}
Moreover, since the sets $A_j'$ are disjoint the configurations $\eta^{(j)}$ are independent. Since $O_j$ cannot occur for any $j\in J$ in case that $\Arm(C)$ occurs, it follows that
\begin{equation}\label{eq:arm_bound}
\begin{aligned}
\PP[\Arm(C)\,|\,\eta_k]\,&\le\,\PP[E\,|\,\eta_k]+\PP\Big[E^c\cap\bigcap_{j\in J}O_j^c\,\Big|\,\eta_k\Big]\\
&\le\,\PP[E\,|\,\eta_k]+\prod_{j\in J}\PP\big[g_j(\eta^{(j)})=0\,\big|\,\eta_k\big]\\
&\le\,\PP[E\,|\,\eta_k]+\prod_{j\in J}\Big(\PP[O_j^c\,|\,\eta_k]+\PP[E\,|\,\eta_k]\Big).
\end{aligned}
\end{equation}
Recall the constant $\useconstant{c:non_trivial}>0$, from~\eqref{eq:non_trivial}, and introduce the events
\begin{equation*}
D:=\left\{\PP[E\,|\,\eta_{k}] \geq \sfrac1n \right\}\quad\text{and}\quad
D_{j}:=\big\{\PP[O_{j}\,|\,\eta_{k}] \leq \useconstant{c:non_trivial}^{4}/32-2/n\big\},
\end{equation*}
and let $D^*$ denote the event that $D_j$ occurs for at least half the indices in $J$. From~\eqref{eq:arm_bound} we conclude that on $(D^*\cup D)^c$ there exists $\delta>0$ such that
$$
\PP[\Arm(C)|\eta_k]\,\le\,\sfrac1n+\big[(1-{\useconstant{c:non_trivial}^{4}}/{32})+\sfrac3n\big]^{\sfrac{|J|}{2}}\,\le\, n^{-\delta}.
$$

It remains to bound the probability that either $D$ or $D^*$ occurs.
By Markov's inequality and Lemma~\ref{lemma:large_cell},
\begin{equation}\label{eq:bound_event_2}
\PP_{kn, \sfrac{1}{2}}[D] \,\leq\, n\,\PP_{n,\sfrac12}[E]\,\leq\, n\cdot\exp(-\useconstant{c:large_cell}n^{\sfrac{1}{3}}).
\end{equation}
Since $nm^2\gg1$ and the annulus $A_j$ is the union of four rectangles with sides $3\cdot4^jm$ and $4^jm$, it follows from Lemma~\ref{lemma:horizontal_crossing} and Harris' inequality that
\begin{equation}\label{eq:bound_event_1}
\PP_{kn, \sfrac{1}{2}}\big[\PP[O_j|\eta_k]\le\useconstant{c:non_trivial}^{4}/16\big] \leq 4k^{-\sfrac{1}{2}}.
\end{equation}
Here, it is important to observe that the bound is independent of the chosen annulus. Indeed, if the annulus is not entirely contained in $S$, then it would only be harder for blue to reach its outer boundary from within.

We then observe that
\begin{equation}\label{eq:DuD*}
\PP_{kn, \sfrac{1}{2}}[D\cup D^*] \,\leq\, \PP_{kn, \sfrac{1}{2}}[D]+2^{\sfrac{|J|}{2}}\sup_{I}\PP_{kn, \sfrac{1}{2}}\Big[D^{c} \cap \bigcap_{j \in I} D_{j}\Big],
\end{equation}
where the supremum above is taken over all subsets of $J$ with at least $|J|/2$ elements.
Repeated use of~\eqref{eq:g_j} shows that
\begin{equation*}
\begin{split}
\PP_{kn, \sfrac{1}{2}}\Big[D^{c} \cap \bigcap_{j \in I} D_{j}\Big] \,& \leq\, \PP_{kn, \sfrac{1}{2}}\Big[\bigcap_{j \in I}\Big\{\PP\left[g_j(\eta^{(j)})=1\,|\,\eta_{k}\right] \leq \frac{\useconstant{c:non_trivial}^{4}}{16}-\sfrac1n\Big\}\Big] \\
& \leq\, \prod_{j \in I}\PP_{kn, \sfrac{1}{2}}\Big[\PP\left[g_j(\eta^{(j)})=1\,|\,\eta_{k}\right] \leq \frac{\useconstant{c:non_trivial}^{4}}{16}-\sfrac1n\Big] \\
&\leq\,\prod_{j\in I}\Big(\PP_{kn,\sfrac12}[D]+\PP_{kn,\sfrac12}\Big[\PP[O_j|\eta_k]\le\frac{\useconstant{c:non_trivial}^{4}}{16}\Big]\Big)\\
\end{split}
\end{equation*}
Hence, combined with the estimates in~\eqref{eq:bound_event_2}-\eqref{eq:DuD*} we conclude that
\begin{equation*}
\PP_{kn, \sfrac{1}{2}}[D\cup D^*] \,\leq\, n\cdot\exp(-\useconstant{c:large_cell}n^{\sfrac13})+2^{\sfrac{|J|}{2}}\big[n\cdot\exp(-\useconstant{c:large_cell}n^{\sfrac13})+4k^{-\sfrac{1}{2}}\big]^{\sfrac{|J|}{2}}.
\end{equation*}
Since $|J|=\Omega(\log n)$ we may for every $\gamma>0$ choose $k$ large so that the above estimate is bounded by $n^{-\gamma}$ for all large $n$.
\end{proof}

\subsection{Revealment of the algorithm}

\par Now that we have the one-arm estimate, we can bound the revealment of our algorithm. We recall that a point in $\eta_k$ may be queried if the $m\times m$ cell in which it belongs is either `close' to the random vertical line through $R$, or `far' but connected by  a blue path to that line, or if the algorithm at some point discovers a subcell of some $m\times m$ cell which is empty. 

\begin{proposition}\label{prop:revealment}
Let $\mathcal{A}$ denote Algorithm~\ref{alg:crossing}. There exist $\delta>0$ and $k_0\ge1$ such that, for every $k \geq k_{0}$, $p\le\sfrac12$, and all large $n$, we have
\begin{equation*}
{\PP}_{kn,p}\big[\delta_{\sfrac{1}{k}}(\mathcal{A},{\eta}_{k})> n^{-\delta}\big] < n^{-50}.
\end{equation*}
\end{proposition}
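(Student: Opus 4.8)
The plan is to decompose the event $\{\delta_{\sfrac1k}(\mathcal{A},\eta_k)>n^{-\delta}\}$ according to the three ways a point of $\eta_k$ can be queried, and bound each contribution separately. Fix a point $z\in\eta_k$ lying in some $m\times m$ cell $C$ of the mesoscopic lattice; I want to bound $\PP[\mathcal{A}\text{ queries }z\,|\,\eta_k]$ uniformly in $z$. There are three events under which $z$ can be revealed: (i) the algorithm discovers some empty subcell and queries all of $\eta_k$; (ii) the cell $C$ (or one of its eight neighbours) intersects $R\cap\{x=x_0\}$ or is adjacent to such a cell, i.e. $C$ is "close" to the random vertical line; (iii) $C$ is "far" from the line but is explored because some neighbouring cell is connected to $\{x=x_0\}$ by a blue path inside the safe region, which in particular forces the arm event $\Arm(C)$ (a blue path from $C'$ reaching distance $\sqrt m$).

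First I would handle (i). The algorithm queries everything only if some explored subcell is empty; since each subcell has side $m/3 \gg n^{-1/2}$, the expected number of points of $\eta$ in a subcell is $\gg 1$, so a fixed subcell is empty with probability at most $\exp(-cn m^2)$, and by a union bound over the $O(m^{-2})=O(\sqrt n)$ subcells, $\PP_{n,\sfrac12}[\text{some subcell of }\eta\text{ is empty}]\le \exp(-c' n^{1/2})$ (this is essentially Lemma~\ref{lemma:large_cell}). Conditioning on $\eta_k$ and applying Markov's inequality as in~\eqref{eq:bound_event_2}, the probability that $\PP[\text{(i)}\,|\,\eta_k] \ge n^{-\delta}$ is at most $n^{\delta}\exp(-c'n^{1/2}) \le \tfrac12 n^{-50}$ for large $n$. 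For (ii), note that $x_0$ is chosen uniformly in an interval of length comparable to $1$, and $C$ is "close" only if $x_0$ falls within $O(m)$ of the $x$-coordinate of $C$; hence $\PP[\text{(ii)}\,|\,\eta_k]\le c\,m = c\lceil n^{1/4}\rceil^{-1}\le n^{-\delta}$ deterministically for any $\delta<1/4$ and large $n$, so this case contributes nothing to the bad event. For (iii), being queried forces $\eta\in\Arm(C)$ together with $x_0$ being close enough to $C$ that the relevant blue component reaches it — but even discarding the latter constraint, $\PP[\text{(iii)}\,|\,\eta_k]\le \PP[\eta\in\Arm(C)\,|\,\eta_k]$, and Proposition~\ref{prop:one_arm} (applied with $\gamma=51$, say) gives that this exceeds $n^{-\delta}$ with probability at most $n^{-51}$ for suitable $\delta$ and $k\ge k_0$.

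Putting the pieces together: shrinking $\delta$ so that all three bounds hold simultaneously (taking $\delta$ smaller than $1/4$, smaller than the exponent in Proposition~\ref{prop:one_arm}, and using $3n^{-\delta}\le n^{-\delta'}$ after a further adjustment), a point $z\in C$ is queried with conditional probability exceeding $n^{-\delta}$ only if one of the three events above has conditional probability exceeding $\tfrac13 n^{-\delta}$. Taking a union bound over the $O(n^{1/2})$ cells $C$ of the lattice — the revealment of a cell controls the revealment of every point inside it simultaneously, which is the reason the argument is organized cell-by-cell — yields
\begin{equation*}
\PP_{kn,p}\big[\delta_{\sfrac1k}(\mathcal{A},\eta_k)>n^{-\delta}\big]\le O(n^{1/2})\cdot\big(n^{-51}+\tfrac12 n^{-50}\big)<n^{-50}
\end{equation*}
for all large $n$, as desired. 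The adjustment of constants (choosing $\gamma$ large in Proposition~\ref{prop:one_arm} to absorb the $n^{1/2}$ from the union bound, and reconciling the three different $\delta$'s) is routine bookkeeping.

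The main obstacle is case (iii): everything hinges on translating "$z$ is queried in the exploration phase" into the clean geometric arm event $\Arm(C)$ to which Proposition~\ref{prop:one_arm} applies. One must be careful that the exploration is carried out with respect to the *safe* region, where the tiling of $\eta$ is correctly determined, so that a blue connection discovered by the algorithm genuinely corresponds to a blue path of $\eta$ from near the vertical line out to $C$; since on $E^c$ (no large cell) the safe region faithfully represents the tiling, and on $E$ we are already in case (i), this reduction goes through. The remaining subtlety — that $\Arm(C)$ as defined requires reaching distance $\sqrt m$ whereas a blue path to the vertical line may need to travel a macroscopic distance — only makes the bound easier, since a long blue arm in particular contains a blue crossing of the annulus up to scale $\sqrt m$.
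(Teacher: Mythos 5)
Your overall strategy is correct and essentially the same as the paper's: decompose queries into the three cases (global dump on empty subcell / cell close to the random line / arm event), control each cell uniformly over the points inside it, and absorb the union bound over the $O(n^{1/2})$ cells by taking $\gamma$ in Proposition~\ref{prop:one_arm} sufficiently large. However, there is a genuine slip in the boundary between cases (ii) and (iii), and as written it breaks the reduction to $\Arm(C)$.

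You define ``close to the line'' at scale $m$: the cell $C$ (or one of its neighbours) intersects the line $\{x=x_0\}$ or is adjacent to such a cell, giving a bound of order $m$ for case (ii). You then claim that for any $C$ not in case (ii), being queried via exploration forces $\Arm(C)$. This is false for $C$ at distance between about $3m$ and $\sqrt{m}/2$ from the line $\{x=x_0\}$: such a cell is ``far'' in your sense and can be reached by a blue path from the line, yet that path lies entirely inside the square of side $\sqrt{m}$ centred at $C$, so $\Arm(C)$ need not occur. The event $\Arm(C)$ is defined precisely at the scale $\sqrt{m}$, so the correct threshold for ``close'' is of order $\sqrt{m}$, not $m$; the paper uses $2\sqrt{m}$. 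With that fix case (ii) has probability of order $\sqrt{m}\asymp n^{-1/8}$ rather than $n^{-1/4}$, which is still polynomially small, and the rest of your argument goes through. Your closing remark, that the arm only has to reach scale $\sqrt{m}$ while the blue path may travel macroscopically, addresses the case $C$ far from the line; the overlooked issue is the opposite regime, where $C$ is too near the line for the path to be forced out past the $\sqrt{m}$ annulus. The rest of your bookkeeping (Markov bound for the empty-subcell event, union bound absorbed by choosing $\gamma$ large, shrinking $\delta$) matches the paper's proof.
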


\begin{proof}
As before we partition the unit square $S$ into cells of side length $m$, and split each cell $C$ into nine further subcells. Let $G$ be the event that each such subcell contains a point of $\eta$, and let
$$
B:=\big\{\PP[G^c|\eta_k]>\sfrac1n\big\}.
$$
Markov's inequality then gives that, for large $n$,
$$
\PP_{kn,p}[B]\,\le\, n\,\PP_{n,p}[G^c]\,\le\, n\cdot 9m^{-2}\exp(-9^{-1}nm^2)\,<\,\frac12n^{-50}.
$$

Next we fix $\gamma=100$ and let $\delta>0$ and $k_0\ge1$ be as in Proposition~\ref{prop:one_arm}. Let $B'$ denote the event that for some $m\times m$ cell $C$, we have $\PP[\Arm(C)|\eta_k]>n^{-\delta}$. The union bound and Proposition~\ref{prop:one_arm} then gives that for large $n$
$$
\PP_{kn,p}[B']\,\le\, m^{-2}\max_{C\subseteq S}\PP_{kn,p}\big[\PP[\Arm(C)|\eta_k]>n^{-\delta}\big]\,<\, \frac12n^{-50}.
$$

For a given $m\times m$ cell $C$ we let $D_C$ be the event that $C$ is within distance $2\sqrt{m}$ of the random line through $R$. The probability of $D_C$ is independent of $\eta_k$, and one can obtain an upper bound of order $\sqrt{m}$, uniformly in $C$.

For a point of $\eta_k$ to be queried there has either to exist a subcell of some $m\times m$ cell that is empty, or the point must lie in a cell $C$ within distance $2\sqrt{m}$ of the randomly chosen vertical line through $R$, or $\Arm(C)$ has to occur. The revealment of $\mathcal{A}$ thus has to satisfy
$$
\delta_{\sfrac1k}(\mathcal{A},\eta_k)\,\le\,\max_{C\subseteq S}\Big(\PP[G^c|\eta_k]+\PP[D|\eta_k]+\PP[\Arm(C)|\eta_k]\Big),
$$
which restricted to the event $(B\cap B')^c$ is at most $n^{-1}+n^{-\sfrac18}+n^{-\delta}$.
\end{proof}

We may analogously to the algorithm $\mathcal{A}$ define an algorithm $\mathcal{A}'$ which looks for a vertical red crossing of $R$. By symmetry it follows that, for $p\ge\sfrac12$,
$$
\PP_{kn,p}\big[\delta_{\sfrac1k}(\mathcal{A}',\eta_k)>n^{-\delta}\big]<n^{-50}.
$$

\section{Noise sensitivity and the threshold window}\label{sec:nstw}

\par This section is devoted to the proofs of Theorems~\ref{teo:ns} and~\ref{teo:tw}. First, we prove Theorem~\ref{teo:ns}, that Voronoi percolation is noise sensitive, with a positive noise sensitivity exponent. Then we bound the width of the threshold window, proving Theorem~\ref{teo:tw}. Throughout the section we work with the two-stage construction of the random Voronoi configuration, as described in Section~\ref{sec:cont_to_disc}.

\begin{proof} [Proof of Theorem~\ref{teo:ns}]
Due to Equation~\eqref{eq:discretization} and Lemma~\ref{lemma:variance_decay} it will suffice, for the first part of the theorem, to show that for some $\gamma>0$ and all large $k$ we have
\begin{equation}\label{eq:discrete_ns}
\EE_{kn, \sfrac{1}{2}}\Big[\EE[f_{R}(\eta)f_{R}(\eta^{\epsilon_n})| \eta_{k}]-\EE[f_{R}(\eta)|\eta_{k}]^{2}\Big] \to 0\quad\text{as }n\to\infty,
\end{equation}
where $\epsilon_n=n^{-\gamma}$.

Let $\mathcal{A}$ be the algorithm in Algorithm~\ref{alg:crossing}.
The Schramm-Steif revealment theorem (Proposition~\ref{prop:ssrt}) gives that, for almost every $\eta_k$ and $m\ge1$, we have
$$
\EE[f_{R}(\eta)f_{R}(\eta^{\epsilon_n})| \eta_{k}]-\EE[f_{R}(\eta)|\eta_{k}]^{2}\,\le\,\exp(-\epsilon_nm)+m^2\delta_{\sfrac12}(\mathcal{A},\eta_k).
$$
Let $\delta>0$ be as in Proposition~\ref{prop:revealment}, and let $B_n$ denote the event that $\delta_{\sfrac{1}{k}}(\mathcal{A},\eta_{k})>n^{-\delta}$. Then $\PP_{kn,\sfrac12}[B_n]<n^{-50}$, and consequently
\begin{equation*}
\begin{split}
\EE_{kn, \sfrac{1}{2}} & \Big[\EE[f_{R}(\eta)f_{R}(\eta^{\epsilon_n})| \eta_{k}]-\EE[f_{R}(\eta)|\eta_{k}]^{2}\Big] \\
& \qquad \leq n^{-50}+\exp(-\epsilon_n m)+m^{2}\EE_{kn, \sfrac{1}{2}}[\delta_{\sfrac{1}{k}}(\mathcal{A},\eta_{k})\charf{B_{n}^{c}}]\\
& \qquad \leq n^{-50}+\exp(-\epsilon_n m)+m^{2}n^{-\delta}.
\end{split}
\end{equation*}
Hence,~\eqref{eq:discrete_ns} holds with $\gamma=\delta/3$ and $n^{\delta/3}\ll m\ll n^{\delta/2}$, which concludes the proof of Theorem~\ref{teo:ns}.
\end{proof}

We proceed with the proof of Theorem~\ref{teo:tw}.

\begin{proof}[Proof of Theorem~\ref{teo:tw}]
Given $\eta_k\in\Omega$ we shall with $\overline\eta_k$ denote its projection onto $S$. We first note that by dominated convergence we have
\begin{equation}\label{eq:change_order}
\frac{d}{dp}\PP_{n,p}[f_R=1]\,=\,\EE_{n,p}\Big[\frac{d}{dp}\PP[f_R(\eta)=1|\overline\eta_k]\Big],
\end{equation}
since the rate at which $\PP[f_R(\eta)=1|\overline\eta_k]$ may increase as $p$ varies is bounded by the number of variables $|\overline\eta_k|$ affected by $p$.
Moreover, given $\overline\eta_k$, we may think of $\eta$ as an element in $\{0,1\}^{\overline\eta_k}\times\{0,1\}^{\overline\eta_k}$, where the first half of the coordinates determine `colour' and the second half determine `presence' in the final configuration. The Margulis-Russo formula then gives that
\begin{equation*}
\frac{d}{dp}\PP[f_R(\eta)=1\,|\,\overline{\eta}_{k}] \, =\, \sum_{x \in \overline{\eta}_{k}}\PP\left[\left. \begin{array}{c}
x \text{ is present and its colour} \\ \text{is pivotal for } f_{R} \end{array}\,\right|\, \overline{\eta}_{k}\right]
\end{equation*}
almost surely. Since a blue point is better than no point, and no point is better than a red point, it follows that switching presence rather than colour of a point is less likely to affect the outcome of $f_R$. Consequently, the derivative is bounded from below by the sum
$$
\sum_{x \in \overline{\eta}_{k}}\PP\big[x \text{ is present and its presence is pivotal for } f_{R}\,\big|\, \overline{\eta}_{k}\big].
$$
Each term in the above expression can be rewritten as $\frac1k\EE[\Inf_x^{\sfrac1k}(f_R,\eta_k)|\overline\eta_k]$, where the factor $\sfrac1k$ comes from the probability of being present. Hence,~\eqref{eq:change_order} and the OSSS inequality (Proposition~\ref{prop:osss}) together give that
\begin{equation}\label{eq:deriv_bound}
\frac{d}{dp}\PP[f_R(\eta)=1] \, \ge\, \frac1k\EE\Big[\sum_{x \in {\eta}_{k}}\Inf_x^{\sfrac1k}(f_R,\eta_k)\Big]\,\ge\,\frac4k\,\EE\bigg[\frac{\var(f_R|\eta_k)}{\delta_{1/k}(\mathcal{A},\eta_k)}\bigg].
\end{equation}

Fix $\epsilon>0$ and let $I_\epsilon=I_\epsilon(n)$ denote the set of points $p\in[0,1]$ for which $\PP_{n,p}[f_R=1]\in(\epsilon,1-\epsilon)$. By monotonicity $I_\epsilon$ is an interval, and for small $\epsilon$ the interval contains the point $\sfrac12$. Consequently, to complete the proof it will suffice to show that there exists $\gamma>0$ such that $|I_\epsilon|\le n^{-\gamma}$ for all $\epsilon>0$.

Let $\mathcal{A}$ be the algorithm in Algorithm~\ref{alg:crossing}, and $\mathcal{A}'$ be the analogously defined algorithm that looks for a vertical red crossing of $R$. We introduce the events
\begin{equation*}
\begin{aligned}
A&:=\big\{\PP[f_R(\eta)=1|\eta_k]\in(\sfrac\epsilon2,1-\sfrac\epsilon2)\big\},\\
B&:=\big\{\min\{\delta_{\sfrac1k}(\mathcal{A},\eta_k),\delta_{\sfrac1k}(\mathcal{A}',\eta_k)\}<n^{-\delta}\big\},
\end{aligned}
\end{equation*}
with which~\eqref{eq:deriv_bound} reduces to
\begin{equation}\label{eq:deriv_bound_2}
\frac{d}{dp}\PP_{n,p}[f_R=1]\,>\,\epsilon^2k^{-1}n^\delta\,\PP_{n,p}[A\cap B].
\end{equation}

Next we fix $k\ge16/\epsilon^2$. By Chebyshev's inequality and Lemma~\ref{lemma:variance_decay} we then have, for all $p\in I_\epsilon$, that
$$
\PP_{n,p}[A^c]\,\le\,(2/\epsilon)^2\var_{n,p}\big(\PP[f_R(\eta)=1|\eta_k]\big)\,\le\,4/(\epsilon^2k)\,\le\,\sfrac14.
$$
By increasing $k$ if necessary, Proposition~\ref{prop:revealment} gives that $\PP_{n,p}[B^c]\le\sfrac14$ for all $p\in[0,1]$ and $n$ large. Integrating over $I_\epsilon$ in~\eqref{eq:deriv_bound_2} thus leads to the bound
$$
1\,\ge\,\int_{I_\epsilon}\frac{d}{dp}\PP_{n,p}[f_R=1]\,dp\,\ge\,\frac12\epsilon^2k^{-1}n^{\delta}|I_\epsilon|,
$$
and hence that $|I_\epsilon|\le2k/(\epsilon^2n^\delta)$. Since $\epsilon>0$ was arbitrary, the theorem follows with $\gamma=\delta/2$.
\end{proof}

\par We can also study the behaviour of $f_{R}(\eta)$ for fixed values of $k$.

\begin{proposition}\label{prop:fixed_k}
For every $p\in[0,1]$ and $k>1$, not necessarily an integer,
\begin{equation*}
\lim_{n\to\infty}\var_{kn,p}\big(\EE\left[f_{R}(\eta)|\eta_{k}\right]\big)=0.
\end{equation*}
Besides, there exists $\delta>0$ such that for all $p\in[0,1]$, $k>1$ and all large $n$
\begin{equation*}
\PP_{kn,p}\bigg[ \delta_{\sfrac1k}(\mathcal{A},\eta_k) > n^{-\delta}\bigg] < n^{-50}.
\end{equation*}
\end{proposition}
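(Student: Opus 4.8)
The plan is to revisit the two-stage construction, now keeping $k>1$ fixed (no longer sending $k\to\infty$), and to rerun the estimates of Sections~\ref{sec:cont_to_disc} and~\ref{sec:algorithms} with quantitative dependence on $n$ alone. The key point is that all the ``$k$ large'' choices made earlier were only needed to absorb the variance term $1/k$ from Lemma~\ref{lemma:variance_decay}; here we instead want to show that this variance, for \emph{any} fixed $k>1$, already tends to $0$ as $n\to\infty$. So the two assertions must be proved essentially in the opposite order from how they appear: the revealment bound comes first and is the engine, and the variance decay is then a consequence.

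First I would establish the revealment estimate. Fix $k>1$ and $p\in[0,1]$. Inspecting the proof of Proposition~\ref{prop:revealment}, the three contributions to the revealment were: (i) an empty subcell, bounded by $n\cdot 9m^{-2}\exp(-9^{-1}nm^2)$ outside a bad event of probability $<\tfrac12 n^{-50}$, which needs no smallness of $k$; (ii) being within distance $2\sqrt m$ of the random line, of order $\sqrt m = n^{-1/8}$, again $k$-free; and (iii) the one-arm event $\Arm(C)$. Only (iii) used $k$ large, via Proposition~\ref{prop:one_arm}, whose proof in turn used Lemma~\ref{lemma:horizontal_crossing} to control $\PP_{kn,\sfrac12}[\PP[O_j|\eta_k]\le \useconstant{c:non_trivial}^4/16]$ by $4k^{-1/2}$. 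The remedy is to replace the ``fixed $k$, $n$ large'' version of Lemma~\ref{lemma:horizontal_crossing} by the genuinely quantitative statement: for every fixed $k>1$, Lemma~\ref{lemma:variance_decay} gives $\var_{kn,p}(\EE[f_R(\eta)|\eta_k])\le 1/k$, but one can do much better by iterating the thinning. Concretely, I would note that applying Lemma~\ref{lemma:variance_decay} with $k$ replaced by any $k'\in(1,k]$ and using that for $k'=k^{1/\log n}$ (or simply by a telescoping argument over $\lfloor \log n\rfloor$ stages of thinning each by a bounded factor) the variance along an annulus $A_j$ of side $\approx 4^j m$ with $nm^2\to\infty$ is $o(1)$ at a rate uniform in the location and shape of the annulus; hence for fixed $k$ and large $n$ each ``good'' event in the proof of Proposition~\ref{prop:one_arm} has conditional probability $1-o(1)$ rather than $1-4k^{-1/2}$. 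Feeding this into the product over $j\in J$, with $|J|=\Omega(\log n)$, yields $\PP_{kn,p}[\PP[\Arm(C)|\eta_k]>n^{-\delta}]<n^{-\gamma}$ for \emph{every} fixed $k>1$ and any $\gamma$, in particular $\gamma>52$, so that the union bound over $m^{-2}=O(\sqrt n)$ cells gives $<\tfrac12 n^{-50}$. Combining (i)--(iii) exactly as in the proof of Proposition~\ref{prop:revealment} gives the displayed revealment bound for all fixed $k>1$, $p\in[0,1]$, and all large $n$.

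Given the revealment bound, the variance decay follows from the identity~\eqref{eq:thinning}. Indeed, writing $k=\sqrt{k'}\cdot\sqrt{k'}$-style, or more simply applying~\eqref{eq:thinning} with $\epsilon'=1-1/k$, we have $\var_{kn,p}(\EE[f_R(\eta)|\eta_k]) = \EE_{kn,p}[f_R(\eta)f_R(\eta(\epsilon'))]-\EE_{kn,p}[f_R(\eta)]^2$ for the thinned pair, and then Proposition~\ref{prop:ssrt} applied conditionally on $\eta_k$ gives, for every $m\ge1$, that this conditional correlation is at most $e^{-\epsilon' m}+m^2\delta_{1/k}(\mathcal{A},\eta_k)$ (here $\epsilon'=1-1/k$ is a fixed positive constant). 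Taking expectations, splitting on the bad event $\{\delta_{1/k}(\mathcal A,\eta_k)>n^{-\delta}\}$ of probability $<n^{-50}$, and choosing $m=m(n)\to\infty$ with $m^2 n^{-\delta}\to 0$ (e.g.\ $m=n^{\delta/3}$), we get $\var_{kn,p}(\EE[f_R(\eta)|\eta_k])\le n^{-50}+e^{-\epsilon' m}+m^2 n^{-\delta}\to 0$. This proves the first display of the proposition.

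The main obstacle is the quantitative improvement of Lemma~\ref{lemma:horizontal_crossing} from ``$1-O(k^{-1/2})$'' to ``$1-o_n(1)$ for fixed $k$'', uniformly over all annuli $A_j$ (including those not fully contained in $S$). The cleanest way to get this is to observe that $\var_{kn,p}(\EE[f_R(\eta)|\eta_k])$ itself is what we are ultimately bounding, so one must be careful not to argue circularly: the escape is that in Proposition~\ref{prop:one_arm} the relevant crossing events live on mesoscopic annuli of side $\asymp 4^j m$ with $n(4^j m)^2\to\infty$, so the \emph{intensity seen by each annulus} diverges, and one may apply~\eqref{eq:thinning} at the scale of that annulus with a bounded thinning factor but with effective intensity $\to\infty$; iterating the thinning $\Theta(\log n)$ times (each stage by a fixed factor, total factor $k$) and using Lemma~\ref{lemma:variance_decay} at each stage together with the uniform box-crossing bound~\eqref{eq:non_trivial} (valid for all rectangles of bounded aspect ratio, uniformly in the underlying intensity) yields the needed $1-o(1)$ without reference to the global function $f_R$. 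Once this uniform mesoscopic estimate is in hand, the rest of the argument is a verbatim rerun of the proofs of Propositions~\ref{prop:one_arm} and~\ref{prop:revealment} with every ``choose $k$ large'' replaced by ``choose $n$ large, $k$ fixed''.
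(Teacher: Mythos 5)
Your proof attempt has genuine gaps, and it also misses the fact that the paper's own route here is almost trivial. You announce that the two claims must be proved ``in the opposite order from how they appear,'' but the opposite is true: the variance decay is the part that comes essentially for free, and the revealment bound is then deduced from it. Indeed, by~\eqref{eq:thinning} the quantity $\var_{kn,p}(\EE[f_R(\eta)|\eta_k])$ for fixed $k>1$ equals the noise-sensitivity correlation $\EE_{n,p}[f_R(\eta)f_R(\eta(\epsilon'))]-\EE_{n,p}[f_R(\eta)]^2$ with $\epsilon'=1-\sfrac1k$, which at $p=\sfrac12$ tends to $0$ by Theorem~\ref{teo:ns} (already established in Section~\ref{sec:nstw}); and for $p\neq\sfrac12$ the variance tends to $0$ trivially because Theorem~\ref{teo:tw} forces $f_R$ to be asymptotically deterministic. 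With the first part in hand, one then reruns Section~\ref{sec:algorithms}: the only place $k$ had to be large was~\eqref{eq:bound_event_1}, and Remark~\ref{remark:crossing_fixed_k} upgrades the bound there from $4k^{-\sfrac12}$ to $o_n(1)$ for every fixed $k>1$, giving the revealment estimate.

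Both halves of your alternative route break down at key steps. For the revealment bound you propose to replace the $4k^{-\sfrac12}$ estimate in~\eqref{eq:bound_event_1} by $o_n(1)$ via ``iterating the thinning $\Theta(\log n)$ times, each stage by a fixed factor, total factor $k$'' --- but these three requirements are mutually inconsistent (a fixed per-stage factor $>1$ applied $\Theta(\log n)$ times gives total factor $n^{\Theta(1)}$, not $k$; and taking per-stage factors $k^{1/\log n}\to1$ makes the per-stage bound $1/k_i$ from Lemma~\ref{lemma:variance_decay} tend to $1$, which is useless). No mechanism is actually exhibited by which the bound $1/k$ improves to $o_n(1)$ for fixed $k$ without invoking the already-proved theorems, which is precisely the role Remark~\ref{remark:crossing_fixed_k} plays in the paper. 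For the variance decay you apply Proposition~\ref{prop:ssrt} ``conditionally on $\eta_k$'' to the pair $(\eta,\eta(\epsilon'))$ with $\epsilon'=1-\sfrac1k$; but under the two-stage construction this pair, conditioned on $\eta_k$, consists of two \emph{independent} $\sfrac1k$-thinnings of $\eta_k$, so the conditional correlation $\EE[f_R(\eta)f_R(\eta(\epsilon'))|\eta_k]-\EE[f_R(\eta)|\eta_k]^2$ is identically zero. Bounding it by $e^{-\epsilon' m}+m^2\delta_{\sfrac1k}(\mathcal{A},\eta_k)$ is vacuous and gives no control on the variance of the conditional expectation, which is a different object. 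To make the Schramm--Steif route work one must condition on a denser cloud $\eta_K$ with $K\gg k$, at which point the argument is literally the proof of Theorem~\ref{teo:ns}; it is cleaner to simply cite that theorem as the paper does.
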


\begin{proof}
For $p=\sfrac12$ the first statement of the proposition is immediate from~\eqref{eq:thinning} and Theorem~\ref{teo:ns}. For $p\neq\sfrac12$ it is a trivial consequence of Theorem~\ref{teo:tw}.

As for the second statement, it is necessary to go through the arguments in Section~\ref{sec:algorithms} again, and notice that the only place where $k$ needs to be large is in~\eqref{eq:bound_event_1}. Due to the first part of this proposition, we may modify Lemma~\ref{lemma:horizontal_crossing}, as pointed out in Remark~\ref{remark:crossing_fixed_k}, to obtain that the probability in~\eqref{eq:bound_event_1} is small for every $k>1$ and $n$ large.
\end{proof}

\section{Square-root stability}\label{sec:srt}

\par In Section~\ref{sec:nstw}, we concluded the proof of Theorem~\ref{teo:ns}, and the remainder of this paper will aim to establish Theorem~\ref{teo:other_noises}. The first step in this direction is to establish a result that roughly states that $f_R$ is stable with respect to perturbations that act independently and uniformly on each of the two colours and change at most order square-root of the points.

\par Throughout this section we shall use the notation $\xi:=\{x\in S:(x,0)\in\eta\}$ and $\zeta:=\{x\in S:(x,1)\in\eta\}$ to denote the set of red and blue points respectively, and identify $\eta$ with the pair $(\xi,\zeta)$ when appropriate.

\begin{proposition}\label{prop:srs}
Let $\eta'=(\xi',\zeta')$ and $\eta=(\xi,\zeta)$ be a pair of configurations in $\Omega$, chosen according to $\PP_{n,\sfrac12}$, and whose joint law satisfies the following properties, stated only for the $\xi$-coordinates:
\begin{enumerate}[\quad (i)]\itemsep1pt \parskip0pt \parsep0pt
\item Given $\xi$, the distribution of $\xi \cap \xi'$ is invariant by permutations of $\xi$, and, conditioned on its size, the set $\xi' \setminus \xi$ is formed by independently and uniformly distributed points in $S$.
\item For every $\delta>0$, there exists a constant $C$ such that, for all large $n$,
\begin{equation}\label{eq:assumption_3}
\PP_{n,\sfrac12}\big[|\xi' \bigtriangleup \xi|> C\sqrt{n}\big] < \delta,
\end{equation}
where $\xi' \bigtriangleup \xi$ is the symmetric difference between the two sets.
\end{enumerate}
If, in addition, the pairs $(\zeta, \zeta')$ and $(\xi,\xi')$ are independent, then, for any rectangle $R \subseteq S$, we have
\begin{equation*}
\PP_{n,\sfrac{1}{2}}\big[f_{R}(\zeta,\xi)\neq f_{R}(\zeta',\xi')\big] \rightarrow 0\quad\text{as }n\to\infty.
\end{equation*}
\end{proposition}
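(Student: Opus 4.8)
The plan is to reduce the statement to a purely discrete noise-stability fact by combining the two-stage construction of Section~\ref{sec:cont_to_disc} with the square-root stability of noise sensitive Boolean functions from~\cite{exclusion_sensitivity}. First I would apply the two-stage construction to the red points: choose $\xi_k$ distributed as a Poisson process of intensity $kn/2$ on $S$ (with $k$ large but fixed, to be chosen later), and obtain $\xi$ by $\sfrac1k$-thinning, so that conditional on $\xi_k$ we view $\xi$ as an element of $\{0,1\}^{\xi_k}$ under $\PP_{\sfrac1k}$. Conditional on $\xi_k$ and on $\zeta$, the function $f_R(\zeta,\cdot)$ is a monotone Boolean function on $\{0,1\}^{\xi_k}$; by Proposition~\ref{prop:fixed_k} (applied to the red-crossing algorithm $\mathcal{A}'$, which determines $f_R$ as a function of the red points with the blue points frozen) this function has revealment $\le n^{-\delta}$ off an event of probability $\le n^{-50}$, and hence by Proposition~\ref{prop:ssrt} it is noise sensitive in the quantitative sense, uniformly over the overwhelming majority of $(\xi_k,\zeta)$.

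Next I would engineer a coupling of $(\xi,\xi')$ with a pair $(\xi,\tilde\xi)$ where $\tilde\xi$ is a genuine $\epsilon_n$-resampling of $\xi$ inside $\{0,1\}^{\xi_k}$, arranged so that $\xi'$ and $\tilde\xi$ differ only when the number of resampled/added points is atypically large. The hypotheses (i) and (ii) are tailored for exactly this: by (ii), $|\xi\bigtriangleup\xi'|\le C\sqrt n$ off an event of probability $<\delta$; by (i), conditioned on $\xi$ the set $\xi\cap\xi'$ is a permutation-invariant (hence, conditioned on its size, uniformly random) subset of $\xi$ and $\xi'\setminus\xi$ is an independent uniform cloud. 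So on the good event $\xi'$ is obtained from $\xi$ by deleting at most $C\sqrt n$ uniformly chosen points of $\xi$ and adding at most $C\sqrt n$ uniform points of $S$ — this is dominated by resampling the colours/presence of order $\sqrt n$ of the coordinates of $\xi_k$ (the added points can, with high probability, be absorbed by sampling extra points of $\xi_k$ that happened to be thinned out, using that $\xi$ and $\xi_k\setminus\xi$ are independent, as in the discussion preceding~\eqref{eq:thinning}). I would then invoke the result of Broman, Garban and Steif~\cite{exclusion_sensitivity}: a noise sensitive sequence of monotone Boolean functions is, with high probability, unchanged when one resamples up to order $\sqrt n$ of its variables. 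Applied conditionally on $(\xi_k,\zeta)$ this gives $\PP[f_R(\zeta,\xi)\neq f_R(\zeta,\tilde\xi)\mid \xi_k,\zeta]\to 0$ off the negligible bad event, and combined with the coupling, $\PP[f_R(\zeta,\xi)\neq f_R(\zeta,\xi')]\to0$.

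Finally I would handle the blue coordinate in exactly the same way and chain the two estimates: writing
\begin{equation*}
\PP\big[f_R(\zeta,\xi)\neq f_R(\zeta',\xi')\big]\le \PP\big[f_R(\zeta,\xi)\neq f_R(\zeta,\xi')\big]+\PP\big[f_R(\zeta,\xi')\neq f_R(\zeta',\xi')\big],
\end{equation*}
each term goes to zero by the argument above, the second one after noting that $f_R(\cdot,\xi')$ is again a monotone Boolean function of the blue points to which the same two-stage/revealment/$\sqrt n$-stability machinery applies, and that the independence of $(\xi,\xi')$ from $(\zeta,\zeta')$ lets us condition cleanly. The quantitative version (with $\epsilon$ replaced by $n^{-\alpha}$) is automatic since Propositions~\ref{prop:ssrt} and~\ref{prop:revealment} already deliver a positive noise-sensitivity exponent, so the noise level in the resampling step may be taken polynomially small.

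\textbf{Main obstacle.} The delicate point is the coupling step: translating ``delete $\le C\sqrt n$ uniformly random existing points and add $\le C\sqrt n$ uniformly random new points'' into ``resample $\le C'\sqrt n$ coordinates of a fixed Boolean cube $\{0,1\}^{\xi_k}$'' requires care, because the added points of $\xi'\setminus\xi$ are not a priori points of $\xi_k$. I expect to resolve this by a further sprinkling/coupling argument — enlarging $\xi_k$ slightly, or equivalently re-running the two-stage construction at density $kn$ and observing that with high probability the $\le C\sqrt n$ added uniform points can be matched to as-yet-unrevealed points of $\xi_k\setminus\xi$ — using that $\xi$ and $\xi_k\setminus\xi$ are independent and that $C\sqrt n \ll |\xi_k\setminus\xi| \asymp (k-1)n/2$. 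Making hypotheses (i)–(ii) interact correctly with the permutation-invariance needed to apply~\cite{exclusion_sensitivity} (which is stated for resampling a uniformly random set of coordinates of prescribed size) is where the bulk of the technical work will lie.
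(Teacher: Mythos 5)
Your plan is, in its essentials, the paper's proof: a two-stage Poisson construction, a coupling that with high probability replaces $\xi'$ by a subset of the denser process, an application of the Broman--Garban--Steif $\sqrt n$-stability lemma conditionally on the dense process, and a bound on the resulting influence sum via the revealment of Algorithm~\ref{alg:crossing} through~\eqref{eq:influence_upper_bound} and Proposition~\ref{prop:fixed_k}. You also correctly identify the delicate coupling step (absorbing the added points $\xi'\setminus\xi$ into unrevealed points of $\xi_k\setminus\xi$) and the right way to resolve it, which is exactly what the paper does via its explicit $\xi^A,\xi^B,\xi^C$ and the triple $(\xi,\xi'',\xi''')$.

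One genuine issue with the proposal as written: you take ``$k$ large but fixed,'' so that conditionally on $\xi_k$ the thinned process $\xi$ is product Bernoulli$(\sfrac1k)$. But the discrete $\sqrt n$-stability lemma you invoke — Lemma~6.1 of~\cite{exclusion_sensitivity}, and its partitioned version Lemma~\ref{lemma:discrete_srs_2} — requires $\omega$ and $\omega^*$ to be \emph{uniform} in $\{0,1\}^n$, i.e.\ it is a $p=\sfrac12$ statement and there is no off-the-shelf version at other $p$. This forces $k=2$, which is exactly what the paper does (the denser set $\xi_2$ has intensity $n$, and $\xi$ is a $\sfrac12$-thinning). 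Nothing else in your argument truly needs $k$ large — Proposition~\ref{prop:fixed_k} covers any $k>1$, including $k=2$ — so the repair is just to set $k=2$, but as written the proposal silently assumes a $p\neq\sfrac12$ BGS lemma that is not available.

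Two smaller organizational points worth noting. First, you handle the two colours by chaining,
\begin{equation*}
\PP\big[f_R(\zeta,\xi)\neq f_R(\zeta',\xi')\big]\le \PP\big[f_R(\zeta,\xi)\neq f_R(\zeta,\xi')\big]+\PP\big[f_R(\zeta,\xi')\neq f_R(\zeta',\xi')\big],
\end{equation*}
conditioning first on $(\xi_2,\zeta)$ and then on $(\zeta_2,\xi')$. The paper instead proves a partitioned version of the BGS lemma (Lemma~\ref{lemma:discrete_srs_2} with the two-block partition $\{\xi_2,\zeta_2\}$) and applies it once to the combined cube $\{0,1\}^{\eta_2}$; its induction on the number of blocks is essentially your chaining done once and for all at the abstract Boolean-function level. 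Both organizations work; yours avoids generalizing the lemma but requires you to check that the conditional influences and the conditional uniformity of $(\xi,\xi'')$ given $\zeta$ behave well, and that the sum of conditional red influences, averaged over $\zeta$, is controlled by the full influence sum of $f_R$ on $\eta_2$ — all true, but worth making explicit. Second, your description of $\mathcal A'$ as ``the red-crossing algorithm, which determines $f_R$ as a function of the red points with the blue points frozen'' is not accurate: both $\mathcal A$ and $\mathcal A'$ query points of $\eta_k$ of \emph{both} colours, and the revealment bound is for the full process. What you actually need (and what works) is to pass from the conditional red influence sum to the full revealment of $\mathcal A$ on $\eta_2$ via~\eqref{eq:influence_upper_bound} after averaging over $\zeta$; no ``blue-frozen'' algorithm is required or constructed.
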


\par The square-root scale that figures in the theorem is meaningful in the sense that $\sqrt{n}$ is an upper bound on the derivative of a monotone Boolean function on $n$ bits. Consequently, the threshold window cannot have a width smaller than $1/\sqrt{n}$, and noise sensitive monotone functions have a window that is strictly wider (cf.~\eqref{eq:influence_upper_bound}). Hence, a uniform perturbation that involve order $\sqrt{n}$ bits is therefore too small to affect the outcome of the function.

\par The above heuristic has been made precise in the setting of Boolean functions in a paper by Broman, Garban and Steif~\cite[Lemma~6.1]{exclusion_sensitivity}. We shall prove Proposition~\ref{prop:srs} via a suitable two-stage construction in which a version of the result from~\cite{exclusion_sensitivity} can be applied.

\begin{lemma}\label{lemma:discrete_srs_2}
Let $A_1,A_2,\ldots,A_k$ be a partition of $[n]$, and let $(\omega, \omega^{*})$ be a pair of configurations in $\{0,1\}^{n}$ with law $\overline{\PP}$ satisfying the following properties:
\begin{enumerate}[\quad (i)]\itemsep1pt \parskip0pt \parsep0pt
\item there exists $c>0$ such that $|A_{i}| \geq cn$, for all $i=1,2,\ldots,k$;
\item $\omega$ and $\omega^{*}$ are under $\overline\PP$ uniformly distributed in $\{0,1\}^{n}$;
\item $\overline\PP$ is invariant under all permutations $\pi$ of $[n]$ such that $\pi(A_{i})=A_{i}$, for all $i=1,2,\ldots, k$;
\item for every $\delta>0$ there exists a constant $C$ such that, for all large $n$ and all $i=1,2,\ldots,k$;
\begin{equation*}
\overline\PP\big[\dist_{A_{i}}(\omega,\omega^*)> C \sqrt{|A_{i}|}\big]<\delta,
\end{equation*}
where $\dist_{A_{i}}(\omega,\omega^*):=\sum_{j\in A_{i}}|\omega(j)-\omega^*(j)|$.
\end{enumerate}
Then, for every $\epsilon>0$, there exists a constant $\tilde{C}$ such that, for all large $n$ and any function $f:\{0,1\}^n\to\{0,1\}$, we have
\begin{equation}\label{eq:bgs}
\overline\PP\big[f(\omega) \neq f(\omega^{*})\big] < \epsilon+\frac{\tilde{C}}{\sqrt{n}}\sum_{k\in[n]}\Inf_{k}^{\sfrac{1}{2}}(f).
\end{equation}
\end{lemma}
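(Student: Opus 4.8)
The plan is to follow the argument Broman, Garban and Steif use for their Lemma~6.1 in~\cite{exclusion_sensitivity}, adapted to the present exchangeable-within-blocks setting. Throughout, write $D:=\{j\in[n]:\omega(j)\neq\omega^*(j)\}$ for the random set of differing coordinates, and note that since the $A_i$ partition $[n]$ and each $|A_i|\ge cn$, there are at most $1/c$ blocks.

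First I would reduce to a good event. Given $\epsilon>0$, apply hypothesis~(iv) with $\delta=c\epsilon/2$ to produce a constant $C$, and let $G$ be the event that $\dist_{A_i}(\omega,\omega^*)\le C\sqrt{|A_i|}$ for every $i$. A union bound over the at most $1/c$ blocks gives $\overline\PP[G^c]<\epsilon$, while on $G$ the Cauchy--Schwarz inequality yields $|D|=\sum_i|D\cap A_i|\le C\sum_i\sqrt{|A_i|}\le C\sqrt{n/c}$. Since
\begin{equation*}
\overline\PP[f(\omega)\neq f(\omega^*)]\ \le\ \overline\PP[G^c]+\overline\PP[f(\omega)\neq f(\omega^*),\,G],
\end{equation*}
it suffices to bound the last term by a constant multiple of $n^{-\sfrac12}\sum_{k\in[n]}\Inf_k^{\sfrac12}(f)$.

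The core step is the estimate
\begin{equation*}
\overline\PP[f(\omega)\neq f(\omega^*),\,G]\ \le\ \sum_{j\in[n]}\Inf_j^{\sfrac12}(f)\cdot\overline\PP[j\in D,\,G].
\end{equation*}
The idea is a telescoping (hybrid) argument: conditionally on $D$ --- on which the event $G$ is measurable --- one interpolates between $\omega$ and $\omega^*$ by flipping the coordinates of $D$ one at a time in a uniformly random order, and a union bound over these $|D|$ steps reduces matters to controlling, at each step, the probability that the coordinate being flipped is pivotal for $f$ at the current configuration. It is here that hypotheses~(ii) and~(iii) are used: exchangeability within the blocks together with the uniformity of both marginals is what should make the intermediate configuration conditionally uniform off the flipped coordinate, so that a step flipping a coordinate of $A_i$ is pivotal with probability comparable, after averaging, to the block average $|A_i|^{-1}\sum_{j\in A_i}\Inf_j^{\sfrac12}(f)$. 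I expect this part to be the main obstacle, and to require importing the argument of~\cite{exclusion_sensitivity} essentially verbatim --- possibly after first conditioning on the vector of per-block difference sizes $(|D\cap A_i|)_i$, which is a block-permutation-invariant statistic and so preserves the symmetry in~(iii).

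Finally I would combine the two ingredients. By~(iii) the event $\{j\in D\}\cap G$ is carried by a block-preserving permutation to $\{j'\in D\}\cap G$ whenever $j,j'$ lie in the same block, so $q_i:=\overline\PP[j\in D,\,G]$ depends only on the block $A_i\ni j$; summing over $A_i$ gives $|A_i|\,q_i=\EE[\charf{G}\,|D\cap A_i|]\le C\sqrt{|A_i|}$, hence $q_i\le C/\sqrt{|A_i|}\le C/\sqrt{cn}$. Substituting into the core estimate,
\begin{equation*}
\overline\PP[f(\omega)\neq f(\omega^*),\,G]\ \le\ \sum_i q_i\sum_{j\in A_i}\Inf_j^{\sfrac12}(f)\ \le\ \frac{C}{\sqrt{cn}}\sum_{j\in[n]}\Inf_j^{\sfrac12}(f),
\end{equation*}
and together with the reduction above this proves~\eqref{eq:bgs} with $\tilde C=C/\sqrt{c}$. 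The genuinely new content is in isolating the block structure, that is, the reduction and this last averaging; the analytic heart is the resampling estimate quoted from~\cite{exclusion_sensitivity}.
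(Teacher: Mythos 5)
Your approach is genuinely different from the paper's, and it has a gap at its core step. The paper proves the lemma by induction on the number of blocks $k$: the base case $k=1$ is quoted as Lemma~6.1 of~\cite{exclusion_sensitivity} (full exchangeability, plus uniformity of $\omega^*$), and the inductive step peels off a block $A_{k+1}$ via the hybrid decomposition
\begin{equation*}
\overline\PP\big[f(\omega)\neq f(\omega^*)\big]\,\le\,\overline\PP\big[f(\omega_{\tilde A},\omega_{A_{k+1}})\neq f(\omega^*_{\tilde A},\omega_{A_{k+1}})\big]+\overline\PP\big[f(\omega^*_{\tilde A},\omega_{A_{k+1}})\neq f(\omega^*_{\tilde A},\omega^*_{A_{k+1}})\big],
\end{equation*}
applying the $k$-block statement to the first term conditionally on $\omega_{A_{k+1}}$, and the $1$-block statement to the second. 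You instead attempt a single global telescoping over $D$ together with a block-averaging, which forces you to reprove a stronger, multi-block version of the BGS estimate rather than invoke it as a black box.

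The gap lies in the core estimate $\overline\PP[f(\omega)\neq f(\omega^*),\,G]\le\sum_{j\in[n]}\Inf_j^{\sfrac{1}{2}}(f)\,\overline\PP[j\in D,\,G]$. The justification you give for it --- that (ii) and (iii) make the intermediate configurations of the interpolation ``conditionally uniform off the flipped coordinate'' --- does not hold: hypotheses (ii) and (iii) constrain the marginals and the block symmetry of the joint law, but place no constraint on the conditional law of $\omega$ given $D$ (nor given the block disagreement counts), and this conditional law can be far from uniform. Consequently the probability that the $t$-th flip is pivotal is not directly bounded by $\Inf_{j_t}^{\sfrac{1}{2}}(f)$, and the telescoping does not close. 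You correctly anticipate that the argument of~\cite{exclusion_sensitivity} must be ``imported'', but adapting their $k=1$ statement to block exchangeability is precisely the content of the lemma, not a verbatim import. The strength of the paper's induction is that it never re-derives the $k=1$ estimate; it treats it as a black box and handles the extra generality by freezing one block at a time.
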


\par Combined with~\eqref{eq:influence_upper_bound} the bound in~\eqref{eq:bgs} may be expressed in terms of the sum of influences squared or the revealment of algorithms.

\begin{proof}
The case $k=1$ is the statement of Lemma~6.1 in~\cite{exclusion_sensitivity} (with the additional hypothesis that $\omega^*$ is uniform in $\{0,1\}^n$). The remaining cases follows from induction on $k$.

Fix some $k \geq 2$, assume the result is true for all $j \leq k$ and fix a partition $A_{1},A_{2}, \ldots, A_{k+1}$. Denote by $\tilde{A}=[n] \setminus A_{k+1}$ and $\omega=(\omega_{\tilde{A}}, \omega_{A_{k+1}})$ for the restrictions of $\omega$ to the sets $\tilde{A}$ and $A_{k+1}$. Observe that
\begin{equation}\label{eq:bound_lemma_srs_2}
\begin{split}
\overline{\PP}\big[f(\omega_{\tilde{A}},\omega_{A_{k+1}})\neq f_R(\omega_{\tilde{A}}^{*},\omega_{A_{k+1}}^{*})\big] & \leq \overline{\PP}\big[f(\omega_{\tilde{A}},\omega_{A_{k+1}})\neq f_R(\omega_{\tilde{A}}^{*},\omega_{A_{k+1}})\big]  \\
& \,+\overline{\PP}\big[f(\omega_{\tilde{A}}^{*},\omega_{A_{k+1}})\neq f_R(\omega_{\tilde{A}}^{*},\omega_{A_{k+1}}^{*})\big].
\end{split}
\end{equation}

To bound the first probability in the last expression above, we apply the induction hypothesis conditioned on $\omega_{A_{k+1}}$ and use that $\omega_{A_{k+1}}$ is uniformly distributed in $\{0,1\}^{A_{k+1}}$ to obtain
\begin{equation}
\overline{\PP}\big[f(\omega_{\tilde{A}},\omega_{A_{k+1}})\neq f_R(\omega_{\tilde{A}}^{*},\omega_{A_{k+1}})\big] \leq \epsilon+\frac{\tilde{C}}{\sqrt{|\tilde{A}|}}\sum_{k\in \tilde{A}}\Inf_{k}^{\sfrac{1}{2}}(f).
\end{equation}
Analogous computations for the last term in~\eqref{eq:bound_lemma_srs_2} concludes the proof.
\end{proof}

\par We now focus on the proof of Proposition~\ref{prop:srs}.

\begin{proof}[Proof of Proposition~\ref{prop:srs}]
The first step of the proof is to find a suitable construction of the pairs $(\zeta,\zeta')$ and $(\xi, \xi')$. Since the perturbation acts independently on the two colours, this construction can be done separately.

\par For this purpose, let $M=|\xi' \cap \xi|$ and $N=|\xi' \setminus \xi|$. Let $\xi_{2}$ be a Poisson point process on $S$ with intensity measure $n\lambda_S$, and let $\xi$ and $\bar{\xi}$ be uniformly chosen subsets of $\xi_{2}$. Given $|\xi|$, sample the pair $(M,N)$ according to the right conditional law. Next, choose uniformly a subset $\xi^{A} \subseteq \xi$ of size $M$ and let $\xi^{B}$ be a uniformly chosen subset of $\xi_{2} \setminus \xi$ of size $\min\{N, |\xi_{2} \setminus \xi|\}$. Besides, let $\xi^{C}$ be a collection of $N$ independent and uniformly chosen points of the square $S$. Now set
\begin{equation*}
\xi'':= \begin{cases}
\xi^{A} \cup \xi^{B}, & \,\, \text{if } N \leq |\xi_{2} \setminus \xi|, \\
\bar{\xi}, & \,\, \text{if } N > |\xi_{2} \setminus \xi|,
\end{cases}
\end{equation*}
and
\begin{equation*}
\xi''':= \begin{cases}
\xi^{A} \cup \xi^{B}, & \,\, \text{if } N \leq |\xi_{2} \setminus \xi|, \\
\xi^{A} \cup \xi^{C}, & \,\, \text{if } N > |\xi_{2} \setminus \xi|.
\end{cases}
\end{equation*}
Construct the collection $(\zeta, \zeta'', \zeta''')$ analogously, and note that $(\zeta,\zeta''')$ and $(\xi, \xi''')$ have the correct joint distribution.

\par In the next step, we note that $\xi'''=\xi''$ with probability tending to 1. To see this, fix $\epsilon>0$ and notice that $N$ and $|\xi_2\setminus\xi|$ are independent, and that the latter is Poisson with parameter $\sfrac n2$. Then, by assumption~\emph{(ii)} we have
$$
\PP\big[N > |\xi_{2}\setminus \xi|\big]\, \leq\, \PP[N>\sfrac n4] + \PP\big[|\xi_2\setminus\xi| \leq \sfrac n4\big]\, \leq\, \epsilon
$$
for all large $n$. The above construction thus gives, for large $n$, that
\begin{equation}\label{eq:bound_bad_event}
\PP_{n,p}\big[f_{R}(\xi,\zeta)\neq f_{R}(\xi',\zeta')\big]\,\leq\, 2\epsilon+\PP\big[f_R(\xi,\zeta)\neq f_R(\xi'',\zeta'')\big].
\end{equation}

\par Conditional on $(\zeta_2,\xi_2)$ the pairs $(\zeta,\zeta'')$ and $(\xi,\xi'')$ can be thought of as pairs of elements in $\{0,1\}^{\zeta_2}$ and $\{0,1\}^{\xi_2}$ respectively. The last step of the proof will thus be to apply Lemma~\ref{lemma:discrete_srs_2} to bound the last probability above. In preparation for this, set $\delta_m:=\epsilon 2^{-2m}$ and let $C_m$ be the constant in hypothesis~\emph{(ii)} that corresponds to $\delta_m$. Let
$$
B_1:=\big\{\PP\big[|\xi \bigtriangleup \xi''| > C_{m} \sqrt{n}\,\big|\,\xi_{2}\big]\ge 2^{-m}\text{ for some }m\ge1\big\}.
$$
Clearly $|\xi\bigtriangleup\xi''|$ is equal to $|\xi\bigtriangleup\xi'''|$ on the event where $N \leq |\xi_{2}\setminus \xi|$. Hence, the union bound and Markov's inequality give, for large $n$, that
\begin{equation}\label{eq:bound_lemma_hypothesis}
\begin{split}
\PP[B_1]& \, \leq \, \PP[B_{1}, N > |\xi_{2}\setminus \xi|] + \PP[B_{1}, N \leq |\xi_{2}\setminus \xi|] \\
& \, \leq \, \PP[N > |\xi_{2}\setminus \xi|] + \PP\left[ \begin{array}{cl} |\xi \bigtriangleup \xi'''| > C_{m} \sqrt{n}\,\big|\,\xi_{2}\big]\ge 2^{-m} \\\text{ for some }m\ge1 \text{ and } N \leq |\xi_{2}\setminus \xi| \end{array} \right ] \\
&\,\le\,\epsilon+\sum_{m\ge1}2^m\,\PP\big[|\xi \bigtriangleup \xi'''|> C_{m}\sqrt{n}\big] \,\leq\,\epsilon+\sum_{m\ge1}\epsilon2^{-m}\,\le\,2\epsilon.
\end{split}
\end{equation}
Let also $B_2:=\{|\xi_2| \notin [\sfrac n2, 2n]\}$ and define the analogous events $\tilde{B}_{1}$ and $\tilde{B}_{2}$ to the collection $\zeta_{2}$. On the event $G:=(B_{1} \cup B_{2} \cup \tilde{B}_{1} \cup \tilde{B}_{2})^{c}$, Lemma~\ref{lemma:discrete_srs_2} combined with~\eqref{eq:bgs} can be applied and it gives that, for large $n$,
\begin{equation*}
\begin{aligned}
\PP\big[f_R(\xi,\zeta)\neq f_R(\xi'',\zeta'')\big]\,&\le\, 6\epsilon+\EE\big[\PP\big[f_R(\xi,\zeta)\neq f_R(\xi'',\zeta'')\big|\zeta_2,\xi_2\big]{\bf 1}_G\big]\\
&\le\, 6\epsilon+C\,\EE\bigg[\frac{1}{\sqrt{|\eta_2|}}\sum_{x\in\eta_2}\Inf_x^{\sfrac12}(f_R,\eta_2)\bigg].
\end{aligned}
\end{equation*}

By combining the last equation above with~\eqref{eq:bound_bad_event} and~\eqref{eq:influence_upper_bound} we obtain
\begin{equation*}
\PP_{n,\sfrac{1}{2}}\big[f_{R}(\xi,\zeta)\neq f_{R}(\xi',\zeta')\big]\,\leq\,8\epsilon+C\,\EE_{n,\sfrac{1}{2}}\left[\sqrt{\delta_{\sfrac1k}(\mathcal{A},\eta_2)}\right],
\end{equation*}
which by Proposition~\ref{prop:fixed_k} is no larger than $9\epsilon$ when $n$ is large.

Since $\epsilon>0$ was arbitrary, the proof is complete.
\end{proof}

\section{Conservative dynamics and related topics}\label{sec:fp}

\par This final section is devoted to different perturbations in our model.

\bigskip

\par {\bf Thinning and sprinkling.} We begin with a comment on nonconservative and time dependent dynamics. We saw in Section~\ref{sec:nstw} that sensitivity with respect to thinning a configuration uniformly is equivalent to the usual concept of noise sensitivity. We here complement that observation by showing that the same is true for sprinkling.

Let $\eta\in\Omega$ be chosen according to $\PP_{(1-\epsilon)n,\sfrac12}$, and let $\eta'$ and $\eta''$ be independent configurations chosen according to $\PP_{\epsilon n,\sfrac12}$. Then the joint law of $(\eta\cup\eta',\eta\cup\eta'')$ equals that of $(\eta,\eta(\epsilon))$, and
\begin{equation*}
\begin{aligned}
&\EE_{n,\sfrac12}\big[f_R(\eta)f_R(\eta(\epsilon))\big]-\EE_{n,\sfrac12}\big[f_R(\eta)\big]^2\\
&\quad\quad=\,\EE\Big[\EE\big[f_R(\eta\cup\eta')f_R(\eta\cup\eta'')\big|\eta\big]\Big]-\EE\Big[\EE\big[f_R(\eta\cup\eta')\big|\eta\big]\Big]^2\\
&\qquad=\,\var\Big(\EE\big[f_R(\eta\cup\eta')\big|\eta\big]\Big).
\end{aligned}
\end{equation*}
Hence, being sensitive with respect to an $\epsilon$-sprinkling is equivalent to being noise sensitive, and thus follows from Theorem~\ref{teo:ns}. That the same holds for an $\epsilon$-thinning was seen already in Section~\ref{sec:nstw}.

\bigskip

\par {\bf Perturbing the colours.} We shall briefly describe the results in~\cite{qvp}, and explain how they imply that the crossing function is sensitive with respect to re-randomizing a small proportion of the colours of the points. That is, if $\eta'$ is obtained from $\eta\in\Omega$ by resampling the second coordinate of each point $(x,u)\in\eta$ independently and uniformly with probability $\epsilon>0$, then
\begin{equation}\label{eq:qvp}
\EE_{n,\sfrac12}\big[f_R(\eta)f_R(\eta')\big]-\EE_{n,\sfrac12}\big[f_R(\eta)\big]^2\to0\quad\text{as }n\to\infty.
\end{equation}

Given $\eta\in\Omega$, let $\overline\eta$ denote the projection onto $S$. Then,
\begin{equation*}
\begin{aligned}
\EE_{n,\sfrac12}\big[f_R(\eta)f_R(\eta')\big]-\EE_{n,\sfrac12}\big[f_R(\eta)\big]^2&=\EE_{n,\sfrac12}\Big[\EE\big[f_R(\eta)f_R(\eta')\big|\overline\eta\big]-\EE\big[f_R(\eta)\big|\overline\eta\big]\Big]\\
&\quad+\var_{n,\sfrac12}\Big(\EE\big[f_R(\eta)\big|\overline\eta\big]\Big).
\end{aligned}
\end{equation*}
In~\cite{qvp}, the authors show that both expressions in the above right-hand side vanish as $n\to\infty$, and hence prove~\eqref{eq:qvp}. That the variance term tends to zero shows that observing the tiling but not the colouring of a Voronoi configuration typically gives very little information about whether a colouring will typically produce a horizontal blue crossing or not, and confirms a conjecture of Benjamini, Kalai and Schramm~\cite{bks}. The latter is essentially a statement of noise sensitivity of the crossing function in a quenched sense. One may show that noise sensitivity in the sense of~\eqref{eq:ns_voronoi} follows from that statement. However, we emphasize that the techniques used there are more restrictive than the techniques used here, as they are based on a colour-switching trick. It is therefore motivated to present an alternative proof, as we have done here, that applies in a wide range of settings.

\bigskip

\par {\bf Perturbing the positions.} We now turn to the proof of Theorem~\ref{teo:other_noises}. The proof will be based on Proposition~\ref{prop:srs}, which emphasizes a close relation to the exclusion sensitivity studied in~\cite{exclusion_sensitivity}.

\begin{proof}[Proof of Theorem~\ref{teo:other_noises}]
We shall show that the crossing function $f_R$ is sensitive with respect to re-randomizing the positions of a small proportion of the points. This type of perturbation is conservative in the sense that the number of points of each colour is kept constant. Our goal will be to construct the process in a suitable manner, and then apply Proposition~\ref{prop:srs}.

\par As before we shall identify a configuration $\eta\in\Omega$ with a pair of configurations $(\xi, \zeta)$. Let $(X_{i})_{i \ge1}$ and $(Y_{i})_{i \ge1}$ be independent collections of independent and uniformly distributed points in $S$. In addition, let $L$, $M$ and $N$ be independent Poisson distributed random variables with parameters $(1-\epsilon)n/2$, $\epsilon n/2$ and $\epsilon n/2$, respectively. Next we define a triple $(\xi',\xi'',\xi''')$ as
\begin{equation}
\begin{split}
\xi' & := \{X_{1}, X_{2}, \dots, X_{L+M}\},\\
\xi'' & := \{X_{1}, X_{2}, \dots, X_{L}\}\cup \{Y_{1}, Y_{2}, \dots, Y_{N}\},\\
\xi''' & := \{X_{1}, X_{2}, \dots, X_{L}\} \cup \{Y_{1}, Y_{2}, \dots, Y_{M}\}.
\end{split}
\end{equation}
These will be the collection of red points. Finally, we let $(\zeta',\zeta'',\zeta''')$ be an independent copy of $(\xi',\xi'',\xi''')$, that represents the blue points. We consider the three coloured tessellations $\eta'=(\xi', \zeta')$, $\eta''=(\xi'', \zeta'')$ and $\eta'''=(\xi''', \zeta''')$.

\par Notice that the pair $(\eta',\eta'')$ is distributed as the pair $(\eta,\eta(\epsilon))$ in~\eqref{eq:ns_voronoi}, while the pair $(\eta',\eta''')$ is distributed as the pair $(\eta,\eta^*)$ in Theorem~\ref{teo:other_noises}. We also notice that the pair $(\eta'',\eta''')$ satisfy the conditions of Proposition~\ref{prop:srs}. In particular, Chebyshev's inequality shows that for every $\delta>0$ there exists $C$ such that
\begin{equation*}
\PP\big[|\xi'' \bigtriangleup \xi'''|> C\sqrt{n}\big] \,=\,\PP\big[|M-N| > C \sqrt{n}\big] \,\leq\, \frac{\var(M-N)}{C^{2}n} \,\leq\, \frac{\epsilon}{C^{2}} \,\leq\, \delta.
\end{equation*}
Consequently, Proposition~\ref{prop:srs} implies that
\begin{equation}\label{eq:position_perturbation_stability}
\PP_{n,\sfrac12}\big[f_{R}(\eta(\epsilon))\neq f_{R}(\eta^*)\big] \,=\,\PP\big[f_R(\eta'')\neq f_R(\eta''')\big]\,\rightarrow\, 0.
\end{equation}
Finally, we obtain that
\begin{equation*}
\begin{split}
& \left| \EE_{n,\sfrac12}\left[f_{R}(\eta)f_{R}(\eta^*)\right]-\EE_{n,\sfrac12}\left[f_{R}(\eta)\right]^{2} \right|\,\le\, \PP_{n,\sfrac12}\big[f_{R}(\eta(\epsilon))\neq f_{R}(\eta^*)\big]\\
& \qquad\qquad + \left|\EE_{n,\sfrac12}\left[f_{R}(\eta)f_{R}(\eta(\epsilon))\right]-\EE_{n,\sfrac12}\left[f_{R}(\eta)\right]^{2}\right|,
\end{split}
\end{equation*}
which by Theorem~\ref{teo:ns} and~\eqref{eq:position_perturbation_stability} tends to zero as $n\to\infty$.
\end{proof}

\end{document}